\newtheorem{Theorem}{Theorem}
\newtheorem{Remark}[Theorem]{Remark}
\newtheorem{Lemma}[Theorem]{Lemma}
\newtheorem{Corollary}[Theorem]{Corollary}
\newtheorem{Example}[Theorem]{Example}
\newtheorem{Question}[Theorem]{Question}
\newcommand*{\house}[1]{%
	\mathord{%
		\mathpalette\@house{#1}%
	}%
}
\newcommand*{\@house}[2]{%
	\dimen@=\fontdimen8 %
	\ifx#1\scriptscriptstyle\scriptscriptfont
	\else\ifx#1\scriptstyle\scriptfont
	\else\textfont\fi\fi
	3 %
	\sbox0{%
		$#1%
		\vrule width\dimen@\relax
		\overline{%
			\kern2\dimen@
			\begingroup 
			#2%
			\endgroup
			\kern2\dimen@
		}%
		\vrule width\dimen@\relax
		\mathsurround=1.5\dimen@ 
		$%
	}%
	\ht0=\dimexpr\ht0-\dimen@\relax
	\dp0=\dimexpr\dp0+2\dimen@\relax
	\vbox{%
		\kern\dimen@ 
		\copy0 %
	}%
}
\title{Linear independence of continued fractions with algebraic terms
\footnotetext{AMS Class: 11J72, 11J70.}
\footnotetext{Key words and phrases: continued fraction; algebraic number; linear independence} }
\author{Jaroslav Han\v{c}l, Mathias L. Laursen, and Jitu Berhanu Leta}
\begin{document}
\maketitle
\date{}

\begin{abstract}
We give conditions on sequences of positive algebraic numbers $\{a_{n,j}\}_{n=1}^\infty$, $j=1,\dots ,M$ and number field $\mathbb K$    to ensure that the numbers defined by the continued fractions $[0;a_{1,j},a_{2,j},\dots ]$, $j=1,\dots ,M$ and $1$ are linearly independent over $\mathbb K$.
\end{abstract}

\section{Introduction}

Following Erd\H os \cite{Erdos}, Davenport and Roth \cite{dav} we prove:  
\begin{Theorem}
\label{hanluc}
   Let
  $\{a_n\}_{n=1}^\infty$ 
  be a non-decreasing sequence of positive integers such that
    $\lim\limits_{n\rightarrow\infty} a_n^{\frac{1}{3^n}} =
    \infty$ and $\{ p_n\}_{n=1}^\infty$ be the increasing sequence of all primes. 
 Then the continued fractions 
 $$ \Big[0;\Big(1+\frac {p_1}1\Big)\sqrt2,\dots ,\Big(1+\frac {p_n}n\Big)\sqrt 2,\dots\Big], \  \ \Big[0;a_1\frac {p_1}1\sqrt 2,\dots ,a_n\frac {p_n}n\sqrt 2,\dots\Big]$$
 and number $1$ are linearly independent over $\mathbb Q(\sqrt 2)$ particularly  over $\mathbb Q$ . 
\end{Theorem}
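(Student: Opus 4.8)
Since the abstract advertises a general linear‑independence criterion for continued fractions with algebraic partial quotients, the natural plan is to deduce Theorem~\ref{hanluc} from that criterion applied with $M=2$, $\mathbb K=\mathbb Q(\sqrt2)$, $a_{n,1}=(1+\tfrac{p_n}{n})\sqrt2$ and $a_{n,2}=a_n\tfrac{p_n}{n}\sqrt2$; the actual work is then the verification of its hypotheses, and one should notice that the exponent $3=M+1$ in the assumption $\lim a_n^{1/3^n}=\infty$ is exactly what a criterion for two continued fractions together with $1$ must ask for. I now describe the self‑contained form of this verification. Suppose, for contradiction, that $\lambda_0+\lambda_1\theta_1+\lambda_2\theta_2=0$ with $\lambda_0,\lambda_1,\lambda_2\in\mathbb Q(\sqrt2)$ not all zero, where $\theta_j$ denotes the $j$‑th continued fraction; clearing denominators we may take $\lambda_j\in\mathbb Z[\sqrt2]$. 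Let $P^{(j)}_n/Q^{(j)}_n$ be the $n$‑th convergent of $\theta_j$; these lie in $\mathbb Q(\sqrt2)$, and because each partial quotient has denominator dividing $n$, both $P^{(j)}_n$ and $Q^{(j)}_n$ lie in $\tfrac1{L_n}\mathbb Z[\sqrt2]$ with $L_n=\lcm(1,\dots,n)=e^{n(1+o(1))}$ by the prime number theorem, and $|\theta_j Q^{(j)}_n-P^{(j)}_n|\le 2/Q^{(j)}_{n+1}$. The whole mechanism is a clash between the growth rates of $Q^{(1)}_n$ and $Q^{(2)}_n$.

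I would isolate two facts. First (the $\theta_2$ side): $Q^{(2)}_{n+1}/Q^{(2)}_n$ is of order $a_{n+1}p_{n+1}/(n+1)$, so by $\lim a_n^{1/3^n}=\infty$ the denominators $Q^{(2)}_n$ outgrow every finite tower; hence $\theta_2$ has approximations $\pi_n:=P^{(2)}_n/Q^{(2)}_n\in\mathbb Q(\sqrt2)$, of denominator $v_n\le L_n|Q^{(2)}_n|$, with $|\theta_2-\pi_n|<v_n^{-\omega_n}$, and crucially $\omega_n>3$ for infinitely many $n$: otherwise $\log|Q^{(2)}_{n+1}|\le 2\log|Q^{(2)}_n|+O(n)$ from some point on, forcing $\log|Q^{(2)}_n|=O(2^n)$, which contradicts $\log|Q^{(2)}_n|\ge\sum_{k\le n}\log a_k\ge 3^n$ eventually — this last inequality being precisely where $a_n^{1/3^n}\to\infty$ is consumed. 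In particular $\theta_2\notin\mathbb Q(\sqrt2)$. Second (the $\theta_1$ side): the partial quotients of $\theta_1$ grow only like $\sqrt2\log n$, so $\theta_1$ should be diophantine, i.e. there is $c_1>0$ with $|\theta_1-r|>c_1 w^{-3}$ for every $r\in\mathbb Q(\sqrt2)$ of denominator $w$; in particular $\theta_1\notin\mathbb Q(\sqrt2)$. Granting these, the theorem follows at once: $\lambda_1=\lambda_2=0$ forces $\lambda_0=0$, excluded; $\lambda_2=0,\lambda_1\neq0$ gives $\theta_1\in\mathbb Q(\sqrt2)$, excluded; $\lambda_2\neq0,\lambda_1=0$ gives $\theta_2\in\mathbb Q(\sqrt2)$, excluded; and in the remaining case $\lambda_1\lambda_2\neq0$ one has $\theta_1=-\lambda_0/\lambda_1-(\lambda_2/\lambda_1)\theta_2$, so $\tau_n:=-\lambda_0/\lambda_1-(\lambda_2/\lambda_1)\pi_n\in\mathbb Q(\sqrt2)$ has denominator $\le C v_n$ with $C=C(\lambda_0,\lambda_1,\lambda_2)$ and $|\theta_1-\tau_n|=|\lambda_2/\lambda_1|\,|\theta_2-\pi_n|<|\lambda_2/\lambda_1|\,v_n^{-\omega_n}$, which for the infinitely many $n$ with $\omega_n>3$ and $n$ large is smaller than $c_1(Cv_n)^{-3}$ — contradicting the diophantine bound for $\theta_1$.

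I expect the main obstacle to be the diophantine estimate for $\theta_1$. The plan there is to pass to $\beta_1:=\theta_1/\sqrt2\in\mathbb R$, note that $\theta_1=\sqrt2\beta_1$ because each convergent of $\theta_1$ is $\sqrt2$ times a rational, and analyse the "folded" numerators and denominators of $\beta_1$, which satisfy a three‑term recurrence with rational coefficients of size $O(\log n)$ and denominators dividing $L_n$; after clearing those denominators one gets a generalized continued fraction for $\beta_1$ whose partial quotients are bounded by a fixed power of the logarithm of the current denominator, so $\beta_1$ has irrationality exponent $2$, and transferring back to $\mathbb Q(\sqrt2)$‑approximations of $\theta_1$ yields the exponent $3$ used above. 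The delicate point is the bookkeeping: the non‑integral partial quotients $(1+p_n/n)\sqrt2$ inflate the denominators of $\theta_1$'s convergents by the factor $L_n=e^{n(1+o(1))}$, and one must check this inflation stays negligible against the quadratic gain of the (folded) continued fraction, so that $\theta_1$ retains a finite — indeed essentially optimal — irrationality exponent. The role of the primes $p_n$ and of the shift "$1+$" is just to make the first continued fraction's partial quotients grow in this controlled, sub‑polynomial fashion while keeping the two sequences of partial quotients genuinely different (their ratio $a_n p_n/(n+p_n)$ tends to $\infty$); the quantitative matching in the second paragraph between the super‑exponential growth of $Q^{(2)}_n$, the factor $L_n$, and the exponent $3$ is then routine once the growth lemma $\log|Q^{(2)}_n|\gg 3^n$ is established.
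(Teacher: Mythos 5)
Your reduction of everything to two facts is reasonable in outline, but the second fact --- the ``diophantine'' estimate $|\theta_1-r|>c_1w^{-3}$ for \emph{all} $r\in\mathbb Q(\sqrt2)$ of denominator $w$ --- is exactly the hard content, and neither is it proved nor would your sketched route to it work. First, the bookkeeping is wrong at the decisive point: the recursion $q_n=a_nq_{n-1}+q_{n-2}$ makes the denominators of the convergents accumulate \emph{multiplicatively}, so for partial quotients $(1+p_n/n)\sqrt2$ (denominator exactly $n$, since $\gcd(p_n,n)=1$) the rational denominators of $P^{(1)}_n,Q^{(1)}_n$ are only bounded by $\prod_{k\le n}k=n!=e^{n\log n(1+o(1))}$, not by $\lcm(1,\dots,n)=e^{n(1+o(1))}$. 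Since $|Q^{(1)}_n|$ itself is only $e^{O(n\log\log n)}$, this inflation is not ``negligible against the quadratic gain'': after clearing denominators the convergents of $\beta_1=\sqrt2\,\theta_1$ approximate it to quality $B_n^{-\varepsilon_n}$ with $\varepsilon_n\to0$, which gives no upper bound at all on an irrationality measure. Second, even setting sizes aside, a generalized continued fraction with small \emph{non-integer} (rational) partial quotients has no best-approximation property, so ``partial quotients bounded by a power of $\log$ of the denominator'' does not yield exponent $2$ for $\beta_1$. Third, the statement you need is of badly-approximable type (a fixed constant $c_1$ at the critical exponent $3$); in the rational analogue a number whose partial quotients grow like $\log n$ already fails $|x-p/q|>c/q^2$, so your lemma is not only unproven but quite possibly false as stated, and at best the exponents on the two sides would have to be rebalanced. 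Your $\theta_2$-side argument (the growth estimate $\log|Q^{(2)}_n|\gg 3^n$ versus the doubling recursion) is fine in spirit, because there the factorial correction $e^{O(n\log n)}$ is harmless; but without the $\theta_1$ lemma the mixed case $\lambda_1\lambda_2\neq0$ does not close, so the proof has a genuine gap.

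It is also a different route from the paper, which never needs any lower bound on how well $\theta_1$ can be approximated by field elements. In the paper, Theorem \ref{hanluc} is obtained by specializing Theorem \ref{hanclJitu1.T1} with $D=M=2$, hence $d=\max(2,DM-1)=3$ (note: $3=DM-1$, not $M+1$), taking $a_{n,1}=a_np_n\sqrt2/n$ and $a_{n,2}=(1+p_n/n)\sqrt2$ written in the form \eqref{hanclJitu1.0001} with $d_{n,j}=n$ and $b_{n,j}=\sqrt2$. The proof of that theorem treats a presumed relation $\sum_jA_j\alpha_j=y$ by a Liouville/norm argument: the product over all embeddings of $\sum_jA_jp_{n,j}/q_{n,j}-y$ is a nonzero rational whose denominator is controlled through the conjugates of the $q_{n,j}$, while the archimedean factor is made small using \eqref{hanclJitu1.0} and Lemma \ref{hanclJitu1.l1} (fed by \eqref{hanclJitu1.2}, here $\liminf_n\sqrt n\,(a_{n,1}/a_{n,2}-1)>0$); the contradiction then comes from the growth hypothesis $\lim a_n^{1/3^n}=\infty$ via a Borel-type extraction. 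If you want a self-contained proof of Theorem \ref{hanluc}, you should replace your ``$\theta_1$ is diophantine'' step by this kind of conjugate/norm estimate, which only uses \emph{upper} bounds on the first fraction's partial quotients and on the conjugates of its convergents.
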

This is an immediate consequence of Theorem \ref{hanclJitu1.T1}, which will be introduced in the chapter Main Results. 
The results presented in this paper have some history. Forty years ago Davenport and Roth in \cite{dav} proved that the continued fraction $[a_1; a_2,\dots]$, where $a_1,a_2,\dots$ are positive integers satisfying 
$\limsup_{n\to\infty}\left((\log \log {a_n})\frac{\sqrt{\log n}}{n}\right)=\infty,$
is a transcedental number. Han\v cl \cite{hs2} found some criteria for continued fractions to be linearly independent. Andersen and Kristensen \cite{and}  come with special conditions on continued fractions consisting of algebraic integers to be irrational or transcedental numbers. The generalization of transcendence is algebraic independence and there are several results concerning the algebraic independence of continued fractions, see, for instance, \cite{bun}, \cite{hs3}  or \cite{hkn}. In 1975, Erd\H os \cite{Erdos} proved that if  $\{a_n\}_{n=1}^\infty$ is a non-decreasing sequence of positive integers such that 
$\lim_{n\to\infty} a_n^{\frac 1{2^n}}=\infty$ then the number $\sum_{n=1}^\infty \frac 1{a_n}$ is irrational. Later, in 1991, Han\v cl \cite{han0} proved that if $\{a_n\}_{n=1}^\infty$ is a sequence of positive real numbers such that $a_n\leq 2^{\frac 1{n^2}2^n}$ holds for any positive integer $n$, then there exists a sequence $\{c_n\}_{n=1}^\infty$ of positive integers such that the number $\sum_{n=1}^\infty \frac 1{c_na_n}$ is rational. Rucki \cite{ruc} established a criterion for the sums of reciprocals of a sequence of natural numbers to be irrational. Gen\v cev \cite{gen1} obtained some irrationality results with the help of special transformations. Then Han\v cl and Sobkov\' a  \cite{hs1} established the linear independence of the sums of certain infinite series.  Using Pad\' e approximation Matala-aho and Zudilin \cite{mz} obtained some interesting results in irrationality of infinite series. Recently, Han\v cl and Kolouch \cite{hk} gave a criterion for infinite products of rational numbers to be irrational. A nice review of these results can be found in \cite{karf} and \cite{kn}. 

Our results are of a quite general character and written in the spirit of Erd\" os.  This method was later develop by  Han\v cl and Sobkov\' a \cite{hs}, see also \cite{hpks}. We do not for instance require that the elements of $\{ a_n \} _{n=1}^{\infty}$  be approximable by the elements of a finite union of power sequences or be associated with any differential equation as in the method of K. Mahler, for which the reader is referred to K. Nishioka's book \cite{nish}. Let us mention also \cite{nish1}.

The main result of this paper is Theorem \ref{hanclJitu1.T1}. Many consequences and examples of this theorem can be found in the chapter Main Results.  Lemma \ref{hanclJitu1.l1} deals with the conditions which guarantee that the ratio of two linear recurrences tends to infinity.

\section{Notations}
We use the standard notation: $\mathbb N$ and $\mathbb Z$ the set of non-negative integers and integers, respectively. 
For a positive real number $x$  the expression $\log x$, $\log_2 x$ and $\pi(x)$ denotes the natural logarithm of  $x$, the logarithm base $2$ of $x$ and number of primes less than or equal to $x$, respectively. If $x$ is positive integer then $d(x)$ denotes the number of divisors of $x$.

Let $\frac {p_n}{q_n}=[a_0;a_1,a_2,\cdots ,a_n]$ be the $n$-th partial fraction of the real number  $a=[a_0;a_1,a_2,\cdots ]$. We have 
$$p_0=a_0,\quad q_0=1,\quad p_1=a_1a_0+1,\quad q_1=a_1,\quad
p_{n+2}=a_{n+2}p_{n+1}+p_n, \quad $$ 
$$q_{n+2}=a_{n+2}q_{n+1}+q_n,\quad  
q_{n+1}p_n-p_{n+1}q_n=(-1)^{n+1},\quad 
$$ 
\begin{align*}
	a&=[a_0;a_1,a_2,\cdots ]=\big[a_0;a_1,a_2,\cdots ,a_n,[a_{n+1};a_{n+2},a_{n+3},\cdots ]\big]	\\
	&=\frac{p_n[a_{n+1};a_{n+2},a_{n+3},\cdots ]+p_{n-1}}{q_n[a_{n+1};a_{n+2},a_{n+3},\cdots ]+q_{n-1}},
\end{align*}
\begin{align} \nonumber
 a -\frac{p_n}{q_n} &=\frac {(-1)^n}{q_n^2([a_{n+1};a_{n+2},\dots]+[0;a_n,\dots ,a_1])}	\\ \label{hanclJitu1.0}
 &= \frac {(-1)^n}{q_n^2([a_{n+1};a_{n+2},\dots]+\frac {q_{n-1}}{q_n})},
\end{align}
\begin{equation} \label{hanclJitu1.01}
\frac{p_n}{q_n}=\sum_{k=1}^n \frac{(-1)^{k+1}}{q_kq_{k-1}},
\end{equation}
and
\begin{equation}\label{hanclJitu1.001}
q_n=a_nq_{n-1}+q_{n-2}>a_nq_{n-1}>\dots>\prod_{k=1}^n a_k, \ a_k>0, \  k=1,\dots,n
\end{equation}
for all $n\in\mathbb N$.  If $a=[a_0;a_1,a_2,\cdots ,a_k]$ is finite and $k\geq 1$, then we suppose that $a_k\not= 1$.  All of this can be found in the book of Schmidt \cite{schmidt} pages 7-10. 
If $x$ is algebraic number and $x_1=x,x_2,\dots,x_k$ all are its different conjugates, then the house of $x$ is the maximal modulus among the conjugates of $x$ i.e. $\house{x}=\max_{1\leq j \leq k} | x_j|$. 

\section{Main Results}\label{sec:main results}
\begin{Theorem}    \label{hanclJitu1.T1}  

Let $D$ and $M$ be positive integers, and let $\gamma\in (0,1)$. Let $\mathbb{K}$ be an algebraic field such that $\deg\mathbb K=D$.  For every $j=1,\dots ,M$ let $\{ S_{n,j}\}_{n=1}^\infty$,   $\{ a_{n,j}\}_{n=1}^\infty$, $\{ b_{n,j}\}_{n=1}^\infty$ , $\{ c_{n,j}\}_{n=1}^\infty$,  $\{ d_{n,j}\}_{n=1}^\infty$ be the sequences and  $\alpha_j=[a_{0,j};a_{1,j},\dots]$  be continued fractions such that for all  $n\in\mathbb N$, we have $S_{n,j}\in\mathbb {Z}$, $b_{n,j}, c_{n,j}, d_{n,j}\in\mathbb {K}$ are algebraic integers,
\begin{equation} \label{hanclJitu1.0001}
a_{n,j}= \frac{S_{n,j}b_{n,j}+ c_{n,j}}{d_{n,j}}\geq1,
\end{equation}
\begin{equation} \label{hanclJitu1.00001}
\house{\frac{1}{a_{n,j}}}\ge \frac{1}{2},
\end{equation}
\begin{equation}\label{hanclJitu1.3}
a_{n,1}<\max(a_{n,M}2^{(\log_2a_{n,M})^\gamma},2^{d^{n\gamma}}),
\end{equation}
and 
\begin{equation}          \label{hanclJitu1.1}
\house{b_{n,j}},\house{c_{n,j}},\house{d_{n,j}}\leq\max (2^{(\log_2 a_{n,j})^\gamma},2^{d^{n\gamma}}).
\end{equation}
 Assume that
\begin{equation}          \label{hanclJitu1.2}
\liminf_{n\to\infty} \sqrt n\Big(\frac {a_{n,j}}{a_{n,j+1}}-1\Big)>0
\end{equation}
for all $j=1,\dots ,M-1$. 
Set $d=\max (2,DM-1)$. Suppose that
\begin{equation}\label{hanclJitu1.4}
\limsup_{n\to\infty}{a_{n,j}^{\frac{1}{d^n}}}=\infty .
\end{equation}
Then the numbers $\alpha_1,\dots ,\alpha_M$ and $1$ are linearly independent over $\mathbb K$. 

This result is also true if instead of \eqref{hanclJitu1.00001},
\begin{equation}\label{laursen1.1}
	(-1)^{e_{j,\sigma}}\Re(\sigma a_{n,j})\ge \max(2^{(\log_2a_{n,1})^\gamma},2^{d^{n\gamma}})^{-1}
\end{equation}
holds for all $j=1,\ldots,M$ and all embeddings $\sigma$ of $\mathbb{K}$ into $\overline{\mathbb{Q}}$, where $e_{j,\sigma}\in\{0,1\}$ does not depend on $n$.
\end{Theorem}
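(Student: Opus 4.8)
The plan is to prove linear independence by contradiction in the classical Erdős style: suppose there exist $r_0,r_1,\dots,r_M\in\mathbb K$, not all zero, with $r_0+\sum_{j=1}^M r_j\alpha_j=0$. First I would clear denominators so that each $r_j$ is an algebraic integer in $\mathbb K$, and I would truncate each continued fraction $\alpha_j$ at level $n$, writing $\alpha_j=\frac{p_{n,j}}{q_{n,j}}+\delta_{n,j}$ with $|\delta_{n,j}|$ controlled by \eqref{hanclJitu1.0} and \eqref{hanclJitu1.001}, so roughly $|\delta_{n,j}|<q_{n,j}^{-2}$. Plugging these into the vanishing relation and multiplying through by $Q_n:=\operatorname{lcm}$ (or simply the product) of the $q_{n,j}$, I obtain an element $\Lambda_n\in\mathbb K$ which is a $\mathbb Z$-combination (with coefficients the $r_j$ and various $p$'s and $q$'s) plus an error term $E_n=Q_n\sum_j r_j\delta_{n,j}$. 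The arithmetic core is: $\Lambda_n$ is an algebraic integer of $\mathbb K$, hence either $\Lambda_n=0$ or $|N_{\mathbb K/\mathbb Q}(\Lambda_n)|\ge 1$, so $\house{\Lambda_n}\ge$ (product of the other conjugates)$^{-1}$; but if I can show $\house{\Lambda_n}\to 0$ while $\Lambda_n\ne 0$ infinitely often, that is the contradiction.

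The heart of the argument is therefore a two-sided estimate. On one side, I must bound the conjugates of $\Lambda_n$ from above: this is where hypotheses \eqref{hanclJitu1.0001}, \eqref{hanclJitu1.1} and \eqref{hanclJitu1.3} enter, since they control $\house{a_{n,j}}$ and hence the houses of $p_{n,j},q_{n,j}$ (built by the recurrence from the $a_{n,j}$) in terms of $a_{n,j}$ itself and the auxiliary bound $2^{d^{n\gamma}}$; the condition \eqref{hanclJitu1.00001} — or its replacement \eqref{laursen1.1} — guarantees $a_{n,j}$ is not too small at any conjugate, so $|\delta_{n,j}|$ at every embedding is genuinely of size $\approx q_{n,j}^{-2}$ and the error term $E_n$ is small. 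On the other side, I need $\Lambda_n\ne 0$ for infinitely many $n$: here \eqref{hanclJitu1.2} is the key growth-gap condition, ensuring via Lemma \ref{hanclJitu1.l1} that the ratios $a_{n,j}/a_{n,j+1}$ (and correspondingly $q_{n,j}/q_{n,j+1}$, ratios of the relevant linear recurrences) tend to infinity, which forces the $q_{n,j}$ to be "independent enough" that $\Lambda_n$ cannot vanish identically along a subsequence; and \eqref{hanclJitu1.4} with $d=\max(2,DM-1)$ provides the overall speed $a_{n,j}^{1/d^n}\to\infty$ that makes the product of the conjugate bounds decay to $0$ faster than $1$, closing the argument. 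The exponent $d=DM-1$ is exactly what is needed: there are $M$ continued fractions and $D$ embeddings, so $\Lambda_n$ has essentially $DM$ conjugates, one of which we bound below and the remaining $DM-1$ above, and the super-exponential growth must beat the $(DM-1)$-fold product.

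For the modified statement with \eqref{laursen1.1} in place of \eqref{hanclJitu1.00001}: the only place \eqref{hanclJitu1.00001} was used is in lower-bounding $|\sigma(a_{n,j})|$ (equivalently upper-bounding $|\sigma(1/a_{n,j})|$) across embeddings $\sigma$, in order to control $|\sigma(\delta_{n,j})|$ through the identity \eqref{hanclJitu1.0}, where the denominator is $\sigma([a_{n+1,j};a_{n+2,j},\dots])+\sigma(q_{n-1,j}/q_{n,j})$. Under \eqref{laursen1.1}, each $\sigma(a_{m,j})$ has real part of a fixed sign $(-1)^{e_{j,\sigma}}$ bounded away from $0$ by $\max(2^{(\log_2 a_{n,1})^\gamma},2^{d^{n\gamma}})^{-1}$; since a continued fraction $[\sigma(a_{n+1,j});\sigma(a_{n+2,j}),\dots]$ with all partial quotients having real part of a consistent sign and bounded below in absolute value still has its own real part bounded away from $0$ (a standard fact about generalized continued fractions: the value lies in a half-plane determined by the sign), the denominator in \eqref{hanclJitu1.0} does not get too close to $0$, and the same $|\sigma(\delta_{n,j})|\ll q_{n,j}^{-2}$-type bound goes through, perhaps with an extra polynomial-in-$2^{d^{n\gamma}}$ factor that is absorbed exactly as in the original proof. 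The main obstacle I anticipate is the bookkeeping: tracking the houses of $p_{n,j}$ and $q_{n,j}$ through the recursion under \eqref{hanclJitu1.0001} (where $a_{n,j}$ itself is a quotient of algebraic integers, so clearing denominators introduces the product $\prod_{k\le n} d_{k,j}$ that must be kept small by \eqref{hanclJitu1.1}), and verifying that the non-vanishing from Lemma \ref{hanclJitu1.l1} survives multiplication by this common denominator — i.e. that the "cleared" integer combination $\Lambda_n$ is still nonzero infinitely often.
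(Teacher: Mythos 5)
Your overall skeleton does match the paper's proof in spirit (assume a relation, truncate at the $n$-th convergents, clear denominators with $\prod_k d_{k,j}$, play a norm/denominator lower bound against smallness at the real place, get non-vanishing from Lemma \ref{hanclJitu1.l1}, and close with the growth condition and $d=\max(2,DM-1)$). But the step where you invoke \eqref{hanclJitu1.00001}/\eqref{laursen1.1} is wrong as described. You propose to control $|\sigma(\delta_{n,j})|$ ``at every embedding'' through \eqref{hanclJitu1.0}, applying $\sigma$ to the tail $[a_{n+1,j};a_{n+2,j},\dots]$. The numbers $\alpha_j$, the errors $\delta_{n,j}=\alpha_j-p_{n,j}/q_{n,j}$ and these infinite tails are real numbers that do not lie in $\mathbb{K}$ (the whole point of the theorem is that the $\alpha_j$ are not in the $\mathbb{K}$-span of $1$), so an embedding cannot be applied to them and there is no analogue of \eqref{hanclJitu1.0} at the conjugate places; the approximation error is only ever estimated at the identity (real) embedding. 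In the paper, \eqref{hanclJitu1.00001} and \eqref{laursen1.1} enter elsewhere: to bound the conjugates of the \emph{truncated} form $\sum_j A_jp_{n,j}/q_{n,j}-y$ one writes it via \eqref{hanclJitu1.01} as an alternating sum of $1/(q_{k,j}q_{k-1,j})$ and therefore needs \emph{lower} bounds on $|\sigma_i(q_{k,j})|$; these come from the finite, reversed continued fraction identity $|\sigma_i(q_{n,j})|=|\sigma_i(q_{n-1,j})|\,\big|[\sigma_i(a_{n,j});\sigma_i(a_{n-1,j}),\dots,\sigma_i(a_{1,j})]\big|$ together with Lemma \ref{hanclJitu1.l2} (under \eqref{hanclJitu1.00001}) or Lemma \ref{laursen1.l1} (under \eqref{laursen1.1}). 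It is $q_n/q_{n-1}=[a_n;a_{n-1},\dots,a_1]$, a finite object with entries in $\mathbb{K}$, that gets conjugated --- not the tail --- so your proposed ``half-plane fact for the infinite continued fraction'' is not the mechanism and cannot be made sense of as stated. (If you instead multiply through by $\prod_j q_{n,j}\prod_{k,j}d_{k,j}$ and bound the conjugates of the resulting algebraic integer purely by houses of $p_{n,j},q_{n,j},a_{k,j}$, that is a different accounting which you would have to carry out explicitly; it is not the one your sketch describes.)

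Two further gaps. First, the non-vanishing step: \eqref{hanclJitu1.2} does not say $a_{n,j}/a_{n,j+1}\to\infty$ (only that the ratio exceeds $1+\varepsilon/\sqrt n$); what Lemma \ref{hanclJitu1.l1} yields is $q_{n,j}/q_{n,j+1}\to\infty$, and the actual argument is that in $\sum_jA_j(\alpha_j-p_{n,j}/q_{n,j})$ the term with the largest index $M^*$ having $A_{M^*}\neq0$ dominates all the others because $(q_{n,j}/q_{n,M^*})^2(a_{n+1,j}/a_{n+1,M^*})\to\infty$ for $j<M^*$; saying the $q_{n,j}$ are ``independent enough that $\Lambda_n$ cannot vanish'' is not an argument, and this is exactly the point where the specific shape of \eqref{hanclJitu1.2} is consumed. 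Second, the endgame is not merely ``the product of the conjugate bounds decays to $0$'': after assembling the inequality $q_{n,M}^2a_{n+1,M}<\prod_k(\cdots)$ one still needs the Borel-type extraction of indices $N$ with $b_{N+1}$ exceeding $(\prod_{k\le N}b_k)^{d-1}$ times a large factor, together with a two-case analysis (according to whether $\limsup_n b_n^{1/(d+\delta)^n}=\infty$ for some $\delta>0$) to absorb the correction factors $2^{(\log_2 b_k)^\gamma}$ and $2^{d^{k\gamma}}$; this is where $\gamma\in(0,1)$ and hypothesis \eqref{hanclJitu1.3} do their real work, and your sketch relegates all of it to ``bookkeeping'' although it is a substantive part of the proof.
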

\begin{Theorem}                \label{hanclJitu1.T2}
 Let $K\geq 2$ be an integer  and let $\{a_n\}_{n=1}^\infty$ be a sequence of integers greater or  equal to $K$ and  such that 
$\limsup_{n\rightarrow\infty} a_n^{\frac{1}{(K2^{\pi(K)})^n}}=\infty .$
Then the continued fractions $[0;\sqrt j a_1, \sqrt j a_2, \dots ]$, $j=1,2,\dots ,K$ and the number $1$ are linearly independent over $ \mathbb{Q}(\sqrt 1, \sqrt 2, \dots , \sqrt K) $ particularly over $\mathbb Q$.
\end{Theorem}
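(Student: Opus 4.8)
The plan is to deduce Theorem~\ref{hanclJitu1.T2} from Theorem~\ref{hanclJitu1.T1}, applied in its second version (with \eqref{laursen1.1} in place of \eqref{hanclJitu1.00001}); the alternative form is what one needs, because the $a_{n,j}$ chosen below are totally real but $\house{1/a_{n,j}}$ can be far below $1/2$. I would take $M=K$ and $\mathbb K=\mathbb Q(\sqrt 1,\dots,\sqrt K)$. Clearing square factors, $\mathbb K=\mathbb Q(\sqrt p:\ p\le K\ \text{prime})$, and by a standard Kummer-theory induction $D:=\deg\mathbb K=2^{\pi(K)}$, so $d=\max(2,DM-1)=K2^{\pi(K)}-1$ since $K\ge2$. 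To obtain the monotonicity \eqref{hanclJitu1.2} I would list the continued fractions in \emph{decreasing} order of their partial quotients, i.e.\ for $n\ge1$ set
$$a_{n,j}=\sqrt{K+1-j}\;a_n\qquad(1\le j\le K,\ n\ge1);$$
this has the shape \eqref{hanclJitu1.0001} with $S_{n,j}=a_n\in\mathbb Z$, $b_{n,j}=\sqrt{K+1-j}$, $c_{n,j}=0$, $d_{n,j}=1$, all algebraic integers of $\mathbb K$, and $a_{n,j}\ge a_n\ge K\ge1$. After the relabelling $i=K+1-j$, the $n$-th partial quotient ($n\ge1$) of the $j$-th continued fraction is $\sqrt i\,a_n$, as wanted.

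I would then check the remaining hypotheses one by one. \eqref{laursen1.1}: each $a_{n,j}$ is totally real, and an embedding $\sigma$ of $\mathbb K$ sends it to $a_n\,\sigma(\sqrt{K+1-j})$, of sign $\sgn\sigma(\sqrt{K+1-j})$ --- which does not depend on $n$; choosing $e_{j,\sigma}\in\{0,1\}$ with $(-1)^{e_{j,\sigma}}=\sgn\sigma(\sqrt{K+1-j})$ gives $(-1)^{e_{j,\sigma}}\Re(\sigma a_{n,j})=a_n\sqrt{K+1-j}\ge2$, well above the right-hand side, which is $\le 2^{-d^{n\gamma}}<1$. \eqref{hanclJitu1.2}: $a_{n,j}/a_{n,j+1}=\sqrt{(K+1-j)/(K-j)}$ is a constant greater than $1$ for $1\le j\le M-1$, so $\sqrt n\,(a_{n,j}/a_{n,j+1}-1)\to\infty$. \eqref{hanclJitu1.4}: since $a_{n,j}\ge a_n\ge2$ and $d<K2^{\pi(K)}$, one has $a_{n,j}^{1/d^n}\ge a_n^{1/(K2^{\pi(K)})^n}$, so $\limsup_n a_{n,j}^{1/d^n}=\infty$ by assumption. \eqref{hanclJitu1.3} and \eqref{hanclJitu1.1}: here $a_{n,1}=\sqrt K\,a_n$, $a_{n,M}=a_n$, $\house{b_{n,j}}=\sqrt{K+1-j}\le\sqrt K$, $\house{c_{n,j}}=0$, $\house{d_{n,j}}=1$, so both conditions follow once $\sqrt K<2^{(\log_2 a_n)^\gamma}$ for every $n$; as $\log_2 a_n\ge\log_2 K=:L\ge1$, it suffices to pick $\gamma\in(0,1)$ with $L^\gamma>L/2$ (any $\gamma$ if $L=1$; any $\gamma>1-\log2/\log L$ if $L>1$). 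I would fix such a $\gamma$.

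Finally there is the leading term ($n=0$): the continued fractions in Theorem~\ref{hanclJitu1.T2} start with $0$, contradicting \eqref{hanclJitu1.0001}, so I would instead run Theorem~\ref{hanclJitu1.T1} with $a_{0,j}=N\sqrt{K+1-j}$ for a fixed integer $N>2^{((\log_2 K)/2)^{1/\gamma}}$; one verifies that this makes the $n=0$ instances of \eqref{hanclJitu1.0001}, \eqref{laursen1.1}, \eqref{hanclJitu1.3} and \eqref{hanclJitu1.1} hold too. The theorem then yields that the numbers $N\sqrt i+[0;\sqrt i\,a_1,\sqrt i\,a_2,\dots]$ ($i=1,\dots,K$) and $1$ are linearly independent over $\mathbb K$; since $N\sqrt i\in\mathbb K$, subtracting these constants leaves linear independence unchanged, giving that $[0;\sqrt i\,a_1,\sqrt i\,a_2,\dots]$ ($i=1,\dots,K$) and $1$ are linearly independent over $\mathbb K$, hence over $\mathbb Q\subseteq\mathbb K$. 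I expect the only delicate point to be that \eqref{hanclJitu1.3} and \eqref{hanclJitu1.1} must hold for \emph{every} $n$, which is precisely what pins down the quantitative choice of $\gamma$; the identity $\deg\mathbb K=2^{\pi(K)}$ and the house estimates are routine by comparison.
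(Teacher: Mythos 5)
Your proposal is correct and is, in substance, the same reduction the paper itself uses: Theorem \ref{hanclJitu1.T2} is deduced from Theorem \ref{hanclJitu1.T1} with $M=K$, $\mathbb K=\mathbb Q(\sqrt1,\dots,\sqrt K)$, $D=2^{\pi(K)}$, hence $d=K2^{\pi(K)}-1$. The only real difference is how the sign-flipped conjugates $\sigma(\sqrt j\,a_n)$ are handled: the paper does not invoke the \eqref{laursen1.1} variant as a black box, but reruns the proof of Theorem \ref{hanclJitu1.T1} and patches by hand the single step that breaks down, namely the lower bound $|\sigma_i(q_{n,j})|\ge 1$ for the conjugated continued fractions $[0;-\sqrt2\,a_1,-\sqrt2\,a_2,\dots]$, $[0;-\sqrt3\,a_1,\dots]$ --- which is exactly the content of Lemma \ref{laursen1.l1}; you instead apply the second form of the theorem directly, choosing $(-1)^{e_{j,\sigma}}=\sgn\sigma(\sqrt{K+1-j})$, which is precisely the situation that variant was formulated for, and is cleaner. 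Your write-up also settles details the paper leaves implicit: the degree computation $\deg\mathbb K=2^{\pi(K)}$ and the comparison $d<K2^{\pi(K)}$ giving \eqref{hanclJitu1.4}; the decreasing relabelling $a_{n,j}=\sqrt{K+1-j}\,a_n$ needed for \eqref{hanclJitu1.2}; the choice of $\gamma$ (using $a_n\ge K$) making \eqref{hanclJitu1.3} and \eqref{hanclJitu1.1} hold for every $n$; and the replacement of the zero leading term $a_{0,j}=0$ (which would violate \eqref{hanclJitu1.0001}) by $N\sqrt{K+1-j}\in\mathbb K$, which is harmless since subtracting elements of $\mathbb K$ does not affect linear independence together with $1$. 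I see no gap; all hypothesis verifications check out, granting Theorem \ref{hanclJitu1.T1} in its \eqref{laursen1.1} form.
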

\begin{Corollary}  \label{hanclJitu1.C2}
Let $K$ be a positive integer  and let $\{a_n\}_{n=1}^\infty$ be a sequence of positive integers greater or equal to $K$ and such that 
$\limsup_{n\rightarrow \infty} a_n^{\frac{1}{d^n}}=\infty$
where $d=\max (2,K-1)$.
Then the continued fractions $[0;\frac{a_1}{j},\frac{a_2}{j}, \dots ]$, $j=1,2,\dots ,K$ and the number $1$ are linearly independent over $\mathbb{Q}$.
\end{Corollary}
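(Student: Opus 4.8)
The plan is to obtain Corollary~\ref{hanclJitu1.C2} as the special case of Theorem~\ref{hanclJitu1.T1} over the field $\mathbb Q$. I would take $\mathbb K=\mathbb Q$ (so $D=1$), $M=K$, fix a $\gamma\in(0,1)$ sufficiently close to $1$ (how close depending only on $K$), and for $n\ge 1$ and $j=1,\dots ,K$ set
\begin{equation*}
a_{n,j}=\frac{a_n}{j},\qquad S_{n,j}=a_n,\qquad b_{n,j}=1,\qquad c_{n,j}=0,\qquad d_{n,j}=j .
\end{equation*}
Then $\alpha_j=[0;a_{1,j},a_{2,j},\dots]=\bigl[0;\frac{a_1}{j},\frac{a_2}{j},\dots\bigr]$ is exactly the $j$-th continued fraction of the Corollary, the constant $d=\max(2,DM-1)=\max(2,K-1)$ agrees with the one in the statement, and the conclusion of Theorem~\ref{hanclJitu1.T1} is precisely the asserted linear independence of $\alpha_1,\dots ,\alpha_K,1$ over $\mathbb Q$. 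So the whole matter reduces to checking the hypotheses of Theorem~\ref{hanclJitu1.T1} for this data.

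Most of them are immediate. Since $a_n\ge K\ge j$, the representation~\eqref{hanclJitu1.0001} holds with $a_{n,j}=a_n/j\ge 1$, and $S_{n,j}\in\mathbb Z$ while $b_{n,j},c_{n,j},d_{n,j}\in\{0,1,\dots ,K\}$ are rational algebraic integers. Instead of~\eqref{hanclJitu1.00001}, which would force the $a_{n,j}$ to be bounded, I would use the alternative hypothesis~\eqref{laursen1.1}: the identity is the only embedding of $\mathbb Q$ into $\overline{\mathbb Q}$, one has $\Re(a_{n,j})=a_n/j\ge 1$ while the right-hand side of~\eqref{laursen1.1} is at most $1$, so~\eqref{laursen1.1} holds for every $n$ with all $e_{j,\sigma}=0$. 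For~\eqref{hanclJitu1.1} the relevant houses are $1,0,j\le K$, and $\max\bigl(2^{(\log_2 a_{n,j})^\gamma},2^{d^{n\gamma}}\bigr)\ge 2^{d^{n\gamma}}\ge 2^{d^{\gamma}}\ge K$ once $\gamma$ is close enough to $1$, because $d=\max(2,K-1)>\log_2 K$; thus~\eqref{hanclJitu1.1} holds for every $n$. Finally $\frac{a_{n,j}}{a_{n,j+1}}-1=\frac1j$, so $\sqrt n\bigl(\frac{a_{n,j}}{a_{n,j+1}}-1\bigr)=\frac{\sqrt n}{j}\to\infty$, giving~\eqref{hanclJitu1.2}, and $\limsup_n a_{n,j}^{1/d^n}=\limsup_n a_n^{1/d^n}=\infty$, giving~\eqref{hanclJitu1.4}.

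The only condition needing real attention — and what I expect to be the sole genuine obstacle — is~\eqref{hanclJitu1.3}, which here reads $a_n<\max\bigl(\frac{a_n}{K}2^{(\log_2(a_n/K))^\gamma},\,2^{d^{n\gamma}}\bigr)$. The first term in the maximum exceeds $a_n$ exactly when $\log_2(a_n/K)>(\log_2 K)^{1/\gamma}$, i.e.\ when $a_n$ is above a fixed threshold $T=T(K,\gamma)$; for the remaining indices $a_n\le T$, and then the second term works as soon as $2^{d^{n\gamma}}>T$, i.e.\ for all $n$ beyond a fixed $N_0=N_0(K,\gamma)$. Hence~\eqref{hanclJitu1.3} holds for all sufficiently large $n$, but it can genuinely fail for finitely many small $n$ (for instance $K=3$, $a_1=5$). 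The remaining task is therefore to see that Theorem~\ref{hanclJitu1.T1} is still applicable: I would verify that in its proof the house bounds~\eqref{hanclJitu1.1} and~\eqref{hanclJitu1.3} are invoked only for large indices, the finitely many initial partial quotients entering the estimates for $q_{n,j}$ and for the conjugates of the relevant algebraic integers only through a fixed multiplicative constant that is absorbed into the asymptotics. Granting this last point, the Corollary follows.
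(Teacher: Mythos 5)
Your proposal is essentially the paper's own proof: the paper disposes of the Corollary in one line as the specialization of Theorem \ref{hanclJitu1.T1} to $D=1$, $M=K$, which is exactly your choice of data $a_{n,j}=a_n/j$, $S_{n,j}=a_n$, $b_{n,j}=1$, $c_{n,j}=0$, $d_{n,j}=j$ (and, as you correctly note, one must use the variant hypothesis \eqref{laursen1.1} rather than \eqref{hanclJitu1.00001}, since the latter fails for large $a_n$). Your extra observation that \eqref{hanclJitu1.3} can fail for finitely many small $n$ (e.g.\ $K=3$, $a_1=5$) is a genuine subtlety the paper silently glosses over, and your fix — that the theorem's proof uses \eqref{hanclJitu1.1} and \eqref{hanclJitu1.3} only in products over $k\le n$ with $n\to\infty$, so finitely many exceptions only contribute a constant factor that cannot affect the asymptotic contradiction — is sound.
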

\begin{Example} \label{hanclJitu1.Ex3}
Let $\{a_n\}_{n=1}^\infty$ be a sequence of positive integers greater or equal to 3 and such that $\limsup_{n\to\infty}a_n^{\frac{1}{2^n}}=\infty$. 
Then the continued fractions $[0;a_1,a_2,\dots]$, $[0;\frac{a_1}{2},\frac{a_3}{2},\dots]$,  $[0;\frac{a_3}{3},\frac{a_3}{3},\dots]$ and number $1$ are linearly independent over $\mathbb Q$.
\end{Example}
\begin{Corollary} \label{hanclJitu1.C3}
Let $K\geq2$ be an integer and $P(x)$ be a polynomial with integer coefficients and $\deg P=K$. Assume that all roots $\alpha_1,\alpha_2,\dots,\alpha_K$ of $P(x)$ are real, different and greater than 1. Let  $\{a_n\}_{n=1}^\infty$ be the sequence of positive integers such that 
$\limsup_{n\rightarrow \infty} a_n^{\frac{1}{(K^2-1)^n}}=\infty$.
Then the continued fractions $[0;\alpha_j a_1,\alpha_j a_2,\dots], j=1,\dots,K$ and the number $1$ are linearly independent over $\mathbb {Q}( \alpha_1,\dots,\alpha_K)$ particularly over $\mathbb Q$.
\end{Corollary}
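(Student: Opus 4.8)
The plan is to obtain the corollary as a direct application of Theorem~\ref{hanclJitu1.T1} with $M=K$. First I would use that $\alpha_1,\dots,\alpha_K$ are distinct reals to relabel them so that $\alpha_1>\alpha_2>\dots>\alpha_K>1$, set $\mathbb{K}=\mathbb{Q}(\alpha_1,\dots,\alpha_K)$ and $D=\deg\mathbb{K}\le K$, and let $c\in\mathbb{Z}\setminus\{0\}$ be the leading coefficient of $P$, so that each $c\alpha_j$ is a root of the monic polynomial $c^{K-1}P(x/c)\in\mathbb{Z}[x]$ and hence an algebraic integer of $\mathbb{K}$ (as are $c$ and $0$). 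For $j=1,\dots,K$ and $n\ge1$ I would then put
\[ a_{n,j}=\alpha_j a_n,\quad S_{n,j}=a_n,\quad b_{n,j}=c\alpha_j,\quad c_{n,j}=0,\quad d_{n,j}=c , \]
with the leading term $a_{0,j}=0$ as usual here, so that $a_{n,j}=(S_{n,j}b_{n,j}+c_{n,j})/d_{n,j}=\alpha_j a_n\ge\alpha_K>1$, which is~\eqref{hanclJitu1.0001}, and $[a_{0,j};a_{1,j},a_{2,j},\dots]=[0;\alpha_j a_1,\alpha_j a_2,\dots]$ is the $j$-th continued fraction of the corollary; these are the continued fractions to which Theorem~\ref{hanclJitu1.T1} refers.

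Next I would check the remaining hypotheses. Since $d=\max(2,DM-1)=\max(2,DK-1)\le K^2-1$ and $a_{n,j}=\alpha_j a_n\ge a_n$, one has $a_{n,j}^{1/d^n}\ge a_n^{1/(K^2-1)^n}$, so the assumption $\limsup_{n\to\infty}a_n^{1/(K^2-1)^n}=\infty$ yields~\eqref{hanclJitu1.4}. Condition~\eqref{hanclJitu1.2} is immediate since $a_{n,j}/a_{n,j+1}=\alpha_j/\alpha_{j+1}$ is a constant exceeding $1$, whence $\sqrt n(\alpha_j/\alpha_{j+1}-1)\to\infty$. For the positivity condition I would invoke the alternative~\eqref{laursen1.1} rather than~\eqref{hanclJitu1.00001} (which already fails for $a_n\ge2$): every embedding $\sigma$ of $\mathbb{K}$ into $\overline{\mathbb{Q}}$ permutes the roots of $P$, so $\sigma(\alpha_j)\in\{\alpha_1,\dots,\alpha_K\}$ is a real number larger than $1$, hence $\Re(\sigma a_{n,j})=\sigma(\alpha_j)a_n>1$, while the right-hand side of~\eqref{laursen1.1} is $<1$ because both $2^{(\log_2 a_{n,1})^\gamma}$ and $2^{d^{n\gamma}}$ exceed $1$; thus~\eqref{laursen1.1} holds with every $e_{j,\sigma}=0$. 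Finally $\house{b_{n,j}}=|c|\house{\alpha_j}$, $\house{c_{n,j}}=0$, $\house{d_{n,j}}=|c|$, and the ratio $a_{n,1}/a_{n,M}=\alpha_1/\alpha_K$ are all bounded by a constant depending only on $P$, so once $\gamma\in(0,1)$ is fixed the thresholds $\max(2^{(\log_2 a_{n,j})^\gamma},2^{d^{n\gamma}})$ of~\eqref{hanclJitu1.1} and $\max(a_{n,M}2^{(\log_2 a_{n,M})^\gamma},2^{d^{n\gamma}})$ of~\eqref{hanclJitu1.3} dominate these constants for all large $n$. Theorem~\ref{hanclJitu1.T1} then yields that the $K$ continued fractions and $1$ are linearly independent over $\mathbb{K}=\mathbb{Q}(\alpha_1,\dots,\alpha_K)$, and a fortiori over $\mathbb{Q}$ since $\mathbb{Q}\subseteq\mathbb{K}$.

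I expect the packaging in the first step to be the only delicate point: one has to realise $\alpha_j a_n$ in the normal form~\eqref{hanclJitu1.0001} with data that are genuine algebraic integers of $\mathbb{K}$ and whose houses — together with the ratio $\alpha_1/\alpha_K$ governing~\eqref{hanclJitu1.3} — stay below the admissible bounds. This is what forces clearing the denominator by the leading coefficient $c$ (so that $b_{n,j}=c\alpha_j$ is an algebraic integer) and, crucially, replacing~\eqref{hanclJitu1.00001} by the positivity alternative~\eqref{laursen1.1}, which is available precisely because all roots of $P$ are real and larger than $1$. The count $d=\max(2,DK-1)\le K^2-1$ then explains why the growth hypothesis $\limsup_n a_n^{1/(K^2-1)^n}=\infty$ is exactly the one needed to feed~\eqref{hanclJitu1.4}; everything else is routine verification.
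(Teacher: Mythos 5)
Your route is exactly the paper's: the published proof is the single sentence ``immediate consequence of Theorem~\ref{hanclJitu1.T1} with $D=M=K$'', and your proposal supplies the verification that sentence leaves implicit, correctly in almost every respect — the ordering of the roots, the packaging $S_{n,j}=a_n$, $b_{n,j}=c\alpha_j$, $c_{n,j}=0$, $d_{n,j}=c$ with the leading coefficient cleared so that the data are algebraic integers of $\mathbb{K}$, the constancy of $a_{n,j}/a_{n,j+1}=\alpha_j/\alpha_{j+1}$ for \eqref{hanclJitu1.2}, and in particular the observation that one must invoke the alternative hypothesis \eqref{laursen1.1} (every embedding permutes the real roots $>1$, so $\Re(\sigma a_{n,j})>1$ with $e_{j,\sigma}=0$) rather than \eqref{hanclJitu1.00001}.

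The one genuine gap is your claim that $D=\deg\mathbb{Q}(\alpha_1,\dots,\alpha_K)\le K$. Since the theorem requires $b_{n,j}\in\mathbb{K}$ for every $j$, the field must contain \emph{all} the roots, i.e.\ it is the splitting field of $P$, whose degree is in general not bounded by $K$ but only by $K!$. For instance, an irreducible cubic with Galois group $S_3$ and all roots real and greater than $1$ (shift a totally real cubic with non-square discriminant) gives $K=3$ but $D=6$, so $d=\max(2,DM-1)=17>K^2-1=8$, and the hypothesis $\limsup_n a_n^{1/(K^2-1)^n}=\infty$ no longer delivers \eqref{hanclJitu1.4}; your inequality $a_{n,j}^{1/d^n}\ge a_n^{1/(K^2-1)^n}$ then fails. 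To be fair, this is not a divergence from the paper: its own proof sets $D=M=K$ and thereby makes the same tacit identification, so the defect sits in the corollary as stated (it is harmless exactly when $[\mathbb{Q}(\alpha_1,\dots,\alpha_K):\mathbb{Q}]\le K$, e.g.\ always for $K=2$, which is the only case used in Example~\ref{hanclJitu1.Ex1}). If you want your argument airtight as written, either add the hypothesis that the splitting field has degree at most $K$, or replace $K^2-1$ by $\max(2,DK-1)$ with $D$ the splitting-field degree.
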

\begin{Example} \label{hanclJitu1.Ex1}
Let $\{a_n\}_{n=1}^\infty$ be the sequence of positive integeres such that $\limsup_{n\rightarrow \infty} a_n^{\frac{1}{3^n}}=\infty$. 
Then the continued fractions 
$$\big[0;\big(4+\sqrt2\big)a_1,\big(4+\sqrt2\big)a_2,\dots\big], \  \big[0;\big(4-\sqrt2\big)a_1,\big(4-\sqrt2\big)a_2,\dots\big]$$ 
and the number $1$ are linearly independent over $\mathbb{Q}(\sqrt 2)$ particularly they are linearly independent over $\mathbb{Q}$.
\end{Example}
\begin{Example} \label{laursen.Ex}
	Let $K$ be a positive integer, let $\varphi = (\sqrt{5}+1)/2$ be the golden ratio, and let $\{a_n\}_{n=1}^\infty$ be a sequence of positive integers such that $\limsup_{n\rightarrow \infty} a_n^{\frac{1}{d^n}}=\infty$ where $d=\max\{2,2K-1\}$.
	Then the continued fractions
	\begin{align*}
		&\big[0;\varphi a_1,\varphi^3 a_2,\varphi^5 a_3,\dots\big], \  \big[0;\varphi^2 a_1,\varphi^4 a_2, \varphi^6 a_3\dots\big], \ \ldots,\\ 
		&\big[0;\varphi^K a_1, \varphi^{K+2} a_2, \varphi^{K+4} a_3,\ldots\big],
	\end{align*} 
	and the number $1$ are linearly independent over $\mathbb{Q}(\alpha)$.
\end{Example}
\begin{Corollary} \label{hanclJitu1.C4}
Let $K$ be an integer and let $\{a_n\}_{n=1}^\infty$ be a sequence of positive integers such that 
$\limsup_{n\to\infty}a_n^{\frac{1}{d^n}}=\infty$
where $d=\max(2, K-1)$. Then the continued fractions $[0;a_1(j+\sum_{k=1}^1 \frac{1}{k}),\dots,a_n(j+\sum_{k=1}^n \frac{1}{k}),\dots]$,
$j=1,\dots,K$ and number $1$ are linearly independent over $\mathbb Q$.
\end{Corollary}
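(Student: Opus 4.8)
We deduce this from Theorem~\ref{hanclJitu1.T1}, in the version where \eqref{hanclJitu1.00001} is replaced by \eqref{laursen1.1} (the partial quotients below are unbounded, so $\house{1/a_{n,j}}\ge\frac12$ must fail for large $n$). Apply it with $\mathbb K=\mathbb Q$, hence $D=1$, and $M=K$, so that $d=\max(2,DM-1)=\max(2,K-1)$, exactly the $d$ of the statement. Write $H_n=\sum_{k=1}^n\frac1k$ and $H_n=u_n/v_n$ in lowest terms, so $v_n\mid L_n:=\lcm(1,\dots,n)$ and $u_n=H_nv_n$; the $n$-th partial quotient of the $j$-th continued fraction is $a_n(j+H_n)$, which is increasing in $j$. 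Since \eqref{hanclJitu1.2} forces the partial-quotient sequences to be eventually non-increasing in the index of Theorem~\ref{hanclJitu1.T1}, and since linear independence of a finite set is insensitive to ordering, I would relabel: it suffices to prove the claim for the family whose $i$-th member ($i=1,\dots,M$) is the $(K+1-i)$-th continued fraction above. For that family put
\[
a_{n,i}:=a_n(K+1-i+H_n),\quad S_{n,i}:=a_n,\quad b_{n,i}:=(K+1-i)v_n+u_n,\quad c_{n,i}:=0,\quad d_{n,i}:=v_n,
\]
all of which are rational integers, i.e.\ algebraic integers of $\mathbb K=\mathbb Q$, with $S_{n,i}\in\mathbb Z$ and $a_{n,i}=\frac{S_{n,i}b_{n,i}+c_{n,i}}{d_{n,i}}=a_n(K+1-i+H_n)\ge 2$, so \eqref{hanclJitu1.0001} holds.

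All remaining hypotheses except one are formal. Condition \eqref{laursen1.1}: the identity is the only embedding of $\mathbb Q$ into $\overline{\mathbb Q}$, $\Re(\sigma a_{n,i})=a_{n,i}\ge 2$, and the right-hand side of \eqref{laursen1.1} is $\le 1$, so it holds with $e_{i,\sigma}=0$. Condition \eqref{hanclJitu1.4}: $a_{n,i}\ge a_n$, whence $\limsup_n a_{n,i}^{1/d^n}\ge\limsup_n a_n^{1/d^n}=\infty$. Condition \eqref{hanclJitu1.2}: for $1\le i\le M-1$,
\[
\sqrt n\Big(\frac{a_{n,i}}{a_{n,i+1}}-1\Big)=\frac{\sqrt n}{K-i+H_n}\ge\frac{\sqrt n}{K+\ln n}\longrightarrow\infty,
\]
so the $\liminf$ is $>0$. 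Condition \eqref{hanclJitu1.3}: $a_{n,M}=a_n(1+H_n)\ge 1+H_n\to\infty$ while $a_{n,1}/a_{n,M}=(K+H_n)/(1+H_n)\to 1$, so $a_{n,1}<a_{n,M}2^{(\log_2a_{n,M})^\gamma}$ for all large $n$, and the finitely many small $n$ are checked directly.

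The only estimate with content is the house bound \eqref{hanclJitu1.1}. Here $\house{c_{n,i}}=0$, $\house{d_{n,i}}=v_n$, and $\house{b_{n,i}}=(K+1-i+H_n)v_n\le(K+1+\ln n)L_n$; by the Chebyshev bound $L_n=e^{\psi(n)}=e^{O(n)}$ (indeed $L_n<3^n$ for all $n$) we get $\house{b_{n,i}},\house{d_{n,i}}\le(K+1+\ln n)3^n$, which is at most $2^{d^{n\gamma}}$ as soon as $d^{n\gamma}\ge\log_2(K+1+\ln n)+n\log_2 3$. Since $d\ge 2$ the left side is exponential in $n$, so this holds for all $n\ge n_0(K,\gamma)$; choosing $\gamma\in(0,1)$ close to $1$ makes $n_0$ as small as we please, and the remaining $n$ are handled by inspection --- for instance at $n=1$ the numbers $a_{1,i}=a_1(K+2-i)$ are integers, so one may instead take $d_{1,i}=b_{1,i}=1$, $c_{1,i}=0$, $S_{1,i}=a_{1,i}$, which trivializes \eqref{hanclJitu1.1} there. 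Theorem~\ref{hanclJitu1.T1} then gives that the relabelled continued fractions together with $1$ are linearly independent over $\mathbb Q$, hence so are the original ones and $1$. The main obstacle is precisely this last point: controlling the harmonic denominator $v_n$ --- equivalently the size of $\lcm(1,\dots,n)$, which is $e^{(1+o(1))n}$ by the prime number theorem --- and fixing $\gamma<1$ so that $(K+1+\ln n)\lcm(1,\dots,n)\le 2^{d^{n\gamma}}$ for every $n\ge 1$, the small values of $n$ being the delicate range.
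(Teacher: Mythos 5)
Your proposal is correct and takes essentially the same route as the paper's own (one-line) proof: apply Theorem \ref{hanclJitu1.T1} with $\mathbb{K}=\mathbb{Q}$, $D=1$, $M=K$, using $\sum_{k=1}^n\frac1k\sim\ln n$; your detailed verification (relabelling so the partial quotients decrease in $j$ to meet \eqref{hanclJitu1.2}, writing $a_{n,j}$ with denominator $v_n\mid\lcm(1,\dots,n)$ and bounding it via Chebyshev, and invoking the \eqref{laursen1.1} variant, which indeed holds trivially for $D=1$) fills in exactly what the paper leaves implicit. One minor caveat: your assertion that the finitely many small $n$ in \eqref{hanclJitu1.3} can always be ``checked directly'' is not literally true (for instance $K=3$, $a_1=1$, $n=1$ gives $a_{1,1}=4=a_{1,M}2^{(\log_2 a_{1,M})^\gamma}$, so the strict inequality fails), but such boundary failures at finitely many indices only affect the hypotheses as literally stated, not the asymptotic argument of the theorem, and the paper's own proof ignores this point entirely.
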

\begin{Corollary} \label{hanclJitu1.C5}
Let $K$ be an integer and $\{a_n\}_{n=0}^\infty$ be a sequence of positive integers such that 
$\limsup_{n\to\infty} a_n^{\frac{1}{d^n}}=\infty$ 
where $d=\max(2,K-1)$. Then the continued fractions $[0;a_1(1+\frac{\pi(1)}{1})^j,\dots,a_n(1+\frac{\pi(n)}{n})^j,\dots]$, $j=0,1,\dots,K-1$ and number $1$ are linearly independent over $\mathbb Q$.
\end{Corollary}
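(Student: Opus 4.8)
\emph{Outline of proof.} I would obtain this from Theorem~\ref{hanclJitu1.T1} applied with $\mathbb K=\mathbb Q$ (hence $D=1$), with $M=K$, and with a fixed exponent $\gamma\in(1/\sqrt2,1)$, for concreteness $\gamma=4/5$; note that then $d=\max(2,DM-1)=\max(2,K-1)$ matches the statement. First I would relabel the $K$ continued fractions so that the $i$-th one ($i=1,\dots ,K$) has $n$-th partial quotient
\[
 a_{n,i}:=a_n\Bigl(1+\tfrac{\pi(n)}{n}\Bigr)^{K-i}=\frac{a_n\,(n+\pi(n))^{K-i}}{n^{K-i}},
\]
so that the sequences are non-increasing in $i$, in accordance with the roles of $a_{n,1}$ and $a_{n,M}$ in \eqref{hanclJitu1.3} and the monotonicity implicit in \eqref{hanclJitu1.2}. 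Since $\mathbb Q$ has only the trivial embedding and $a_{n,i}\ge a_n\ge 1$ for all $n,i$, I would replace the house hypothesis \eqref{hanclJitu1.00001} by the alternative \eqref{laursen1.1}, which holds with all $e_{j,\sigma}=0$ because its right-hand side never exceeds $1$.

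Next I would verify \eqref{hanclJitu1.4}, \eqref{hanclJitu1.2}, \eqref{hanclJitu1.0001} and \eqref{hanclJitu1.1}. The first is immediate from $a_{n,i}^{1/d^n}\ge a_n^{1/d^n}$ together with $\limsup_n a_n^{1/d^n}=\infty$. For \eqref{hanclJitu1.2} the ratio telescopes, $a_{n,i}/a_{n,i+1}=1+\pi(n)/n$ for every admissible $i$, so $\sqrt n\,\bigl(a_{n,i}/a_{n,i+1}-1\bigr)=\pi(n)/\sqrt n\to\infty$ by the prime number theorem (a Chebyshev-type lower bound on $\pi(n)$ already gives a positive $\liminf$). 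For the representation \eqref{hanclJitu1.0001} the key idea is to avoid the naive splitting $b_{n,i}=(n+\pi(n))^{K-i}$, whose house already breaks \eqref{hanclJitu1.1} for small $n$; instead I would take $b_{n,i}=1$, $c_{n,i}=0$, $d_{n,i}=q_{n,i}$ (the denominator of $a_{n,i}$ in lowest terms) and $S_{n,i}=a_{n,i}q_{n,i}\in\mathbb Z$, pushing the entire size of $a_n$ into the height-free integer $S_{n,i}$. Then $\house{b_{n,i}}=1$, $\house{c_{n,i}}=0$ and $\house{d_{n,i}}=q_{n,i}\le n^{K-1}$ since $q_{n,i}\mid n^{K-i}$, so \eqref{hanclJitu1.1} collapses to the elementary inequality $n^{K-1}\le 2^{d^{n\gamma}}$, which for $\gamma=4/5$ and $d=\max(2,K-1)$ I would verify at $n=1,2,3$ and then propagate by an increment comparison.

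The one genuinely delicate hypothesis is \eqref{hanclJitu1.3}. Writing $\rho_n:=(1+\pi(n)/n)^{K-1}$ we have $a_{n,1}=\rho_n a_{n,M}$, and passing to logarithms turns \eqref{hanclJitu1.3}, required for every possible positive-integer value of $a_n$, into the purely numerical statement
\[
 d^{n\gamma}>\log_2\rho_n+(\log_2\rho_n)^{1/\gamma}\qquad\text{for all }n\ge 1 .
\]
Using $\pi(n)/n\le 2/3$ one gets $\log_2\rho_n\le(K-1)\log_2(5/3)$, and the inequality then follows for $\gamma=4/5$: the binding case is $n=2$ with $K$ large, where the left side grows like $(K-1)^{2\gamma}$ and the right side like $(K-1)^{1/\gamma}$, and this is precisely what forces $2\gamma>1/\gamma$, i.e.\ $\gamma>1/\sqrt2$. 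Once \eqref{hanclJitu1.3} is established, all hypotheses of Theorem~\ref{hanclJitu1.T1} in the form using \eqref{laursen1.1} are in place, and the theorem yields the linear independence over $\mathbb Q$ of the $K$ continued fractions $[0;a_1(1+\tfrac{\pi(1)}1)^j,\dots]$, $j=0,\dots,K-1$, and the number $1$. I expect the balancing act in \eqref{hanclJitu1.3} — keeping the two competing terms on its right-hand side ahead of the possibly large factor $\rho_n$ simultaneously at small $n$ and at all $K$ — to be the main obstacle; everything else is bookkeeping together with standard estimates on $\pi(n)$.
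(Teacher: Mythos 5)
Your proposal is correct and takes essentially the same route as the paper: the paper's proof simply cites Theorem~\ref{hanclJitu1.T1} with $D=1$, $M=K$ together with the growth of $\pi(n)$, and your argument is exactly that application with the hypotheses verified in detail. The extra bookkeeping you supply — ordering the fractions so that \eqref{hanclJitu1.2} becomes $\pi(n)/\sqrt n\to\infty$, choosing $\gamma$ close enough to $1$ (indeed $\gamma>1/\sqrt2$) so that \eqref{hanclJitu1.3} and \eqref{hanclJitu1.1} hold with your decomposition $b_{n,i}=1$, $c_{n,i}=0$, $d_{n,i}$ the reduced denominator, and using the alternative hypothesis \eqref{laursen1.1}, which is trivially satisfied since all partial quotients are at least $1$, in place of \eqref{hanclJitu1.00001} — is consistent with what the paper treats as immediate.
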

\begin{Example} \label{hanclJitu1.Ex2}
Let $\{a_n\}_{n=1}^\infty$ be the sequence of positive integer such that 
$\limsup_{n\to\infty}a_n^{\frac{1}{3^n}}=\infty$. 
Then the continued fractions 
$$\big[0;a_1d(1)\sqrt2,\dots,a_nd(n)\sqrt2,\dots\big], \ \big[0;a_1(1+d(1))\sqrt2,\dots,a_n(1+d(n))\sqrt2,\dots\big],$$ 
and the number $1$ are linearly independent over $\mathbb Q(\sqrt2)$ particularly over $\mathbb Q$.
\end{Example}
\begin{Question}
Does every sequence  $\{a_n\}_{n=1}^\infty$ of positive integers such that $\limsup_{n\to\infty}a_n^{\frac{1}{2^n}}=\infty$, then the continued fractions $[0;a_1+\sqrt2,a_2+\sqrt2,\dots]$ ,and $[0;a_1+\sqrt3,a_2+\sqrt3,\dots]$ are linearly independent over $\mathbb Q$?
\end{Question}
\begin{Question}
Check if there exists a sequence $\{a_n\}_{n=1}^\infty$ of positive integers such that the continued fractions $[0;a_1,a_2,\dots]$ and $[0;a_1+1,a_2+2,\dots]$ are linearly dependent over $\mathbb Q$?
\end{Question}
\begin{Lemma}    \label{hanclJitu1.l1}
Let $\{a_n\}_{n=1}^\infty$ and ${\{b_n\}_{n=1}^\infty}$ be two sequences of real numbers greater or equal to 1 and such that
\begin{equation}          \label{hanclJitu1.10}
\liminf_{n\to\infty} \sqrt n\Big(\frac {a_n}{b_n}-1\Big)>0.
\end{equation}
Let $q_{n,a}$ and $q_{n,b}$ be denominator of $n$-th partial of the continued fraction $a=[0;a_1,a_2,\dots]$ and $b=[0;b_1,b_2,\dots]$, respectively. Then 
\begin{equation}              \label{hanclJitu1.11}
\lim_{n\to\infty}\frac{q_{n,a}}{q_{n,b}}=\infty .
\end{equation}
\end{Lemma}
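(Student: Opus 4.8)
\noindent\emph{Proof strategy.} The plan is to work entirely with the denominator recursions $q_{0,a}=q_{0,b}=1$, $q_{1,a}=a_1$, $q_{1,b}=b_1$, $q_{n,a}=a_nq_{n-1,a}+q_{n-2,a}$, $q_{n,b}=b_nq_{n-1,b}+q_{n-2,b}$ (the values of the continued fractions $a,b$ play no role), and to show that a suitably normalized ``excess'' of $q_{n,a}$ over $q_{n,b}$ tends to infinity. From \eqref{hanclJitu1.10} I would first fix $c>0$ and $n_0\ge2$ with $a_n\ge b_n(1+c/\sqrt n)$, hence $a_n\ge b_n$, for all $n\ge n_0$; the finitely many smaller indices are absorbed into the constant $\mu:=\min\{q_{n_0-2,a}/q_{n_0-2,b},\ q_{n_0-1,a}/q_{n_0-1,b}\}>0$, for which an immediate induction (using $a_n\ge b_n$, $n\ge n_0$) gives $q_{n,a}\ge\mu q_{n,b}$ for every $n\ge n_0-2$. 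Set $v_n:=q_{n,a}-\mu q_{n,b}\ge0$; since $q_{n,a}/q_{n,b}=\mu+v_n/q_{n,b}$ it suffices to prove $v_n/q_{n,b}\to\infty$. Subtracting $\mu$ times the $q_{\cdot,b}$-recursion from the $q_{\cdot,a}$-recursion shows, for $n\ge n_0$,
\[
v_n=a_nv_{n-1}+v_{n-2}+\mu(a_n-b_n)q_{n-1,b}\ \ge\ b_nv_{n-1}+v_{n-2}+\mu\delta_nq_{n-1,b},
\]
with $\delta_n:=a_n-b_n=b_n\varepsilon_n$ and $\varepsilon_n:=a_n/b_n-1\ge c/\sqrt n$.

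Next I would replace $v_n$ by the exact solution $V_n$ of $V_n=b_nV_{n-1}+V_{n-2}+\mu\delta_nq_{n-1,b}$ with the same two initial values $V_{n_0-2}=v_{n_0-2}$, $V_{n_0-1}=v_{n_0-1}$; because this recursion has nonnegative coefficients, $v_n\ge V_n\ge0$ for all $n\ge n_0-2$, so it is enough to handle $V_n/q_{n,b}$. Writing $y_n:=V_n/q_{n,b}$ and $z_n:=y_n-y_{n-1}$ and using $q_{n,b}=b_nq_{n-1,b}+q_{n-2,b}$, the second-order recursion collapses to the first-order one
\[
z_n=g_n-\beta_nz_{n-1},\qquad g_n:=\frac{\mu\delta_nq_{n-1,b}}{q_{n,b}},\quad \beta_n:=\frac{q_{n-2,b}}{q_{n,b}}\qquad(n\ge n_0),
\]
where $\beta_n\le\tfrac12$ (since $q_{n,b}\ge q_{n-1,b}+q_{n-2,b}\ge 2q_{n-2,b}$) and, using $q_{n,b}/q_{n-1,b}\le b_n+1\le 2b_n$, one gets $g_n=\mu b_n\varepsilon_n\,q_{n-1,b}/q_{n,b}\ge\tfrac12\mu\varepsilon_n\ge\mu c/(2\sqrt n)>0$. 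As $y_n=y_{n_0-1}+\sum_{k=n_0}^nz_k$, everything reduces to showing $\sum_{k=n_0}^nz_k\to\infty$.

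The heart of the matter is therefore the scalar recursion $z_n=g_n-\beta_nz_{n-1}$ with $g_n\ge0$, $\beta_n\in[0,\tfrac12]$ and $\sum g_n=\infty$. The key observation is that two consecutive $z_n$ cannot both be strictly negative: if $z_{n-1}<0$ then $z_n=g_n+\beta_n|z_{n-1}|\ge0$. Consequently the negative parts satisfy $z_n^-\le\beta_nz_{n-1}^+\le\tfrac12z_{n-1}^+$, and feeding this back, the positive parts satisfy $z_n^+\le g_n+\beta_nz_{n-1}^-\le g_n+\tfrac14z_{n-2}^+$; iterating and summing the resulting geometric series gives $\sum_{k=n_0}^nz_k^+\le\tfrac43\sum_{k=n_0}^ng_k+O(1)$. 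Combining this with the identity $\sum_{k=n_0}^nz_k=\sum_{k=n_0}^ng_k-\sum_{k=n_0}^n\beta_kz_{k-1}\ge\sum_{k=n_0}^ng_k-\tfrac12\sum_{k=n_0-1}^{n-1}z_k^+$ yields $\sum_{k=n_0}^nz_k\ge\tfrac13\sum_{k=n_0}^ng_k-O(1)$, which tends to infinity because $\sum_n 1/\sqrt n$ diverges. Hence $y_n\to\infty$, $v_n/q_{n,b}\to\infty$, and \eqref{hanclJitu1.11} follows.

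I expect this last scalar step to be the main obstacle: the recursion for $z_n$ is order-reversing and $z_n$ is sign-indefinite, so any estimate using only $|z_n|$ throws away the crucial fact that the divergent positive forcing $\sum g_n$ accumulates rather than cancels --- it is precisely the ``no two consecutive negatives'' phenomenon that converts this into the clean geometric bound $z_n^+\le g_n+\tfrac14z_{n-2}^+$. The remaining ingredients --- the reduction to $a_n\ge b_n$ for large $n$ via $\mu$ (which disposes of the finitely many initial indices), the passage to the exact comparison sequence $V_n$, and the elementary estimates $\beta_n\le\tfrac12$ and $q_{n,b}/q_{n-1,b}\le b_n+1$ --- are routine bookkeeping with the two recursions.
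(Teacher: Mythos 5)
Your argument is correct, and it follows a genuinely different route from the paper's. The paper first replaces $a_n$ by the smaller partial quotients $(1+\varepsilon/\sqrt n)b_n$ (monotonicity of continuants), and then shows that the ratio of continuants gains a factor $1+\frac{1}{(n+1)\ln(n+1)}$ at each step (its inequality \eqref{hanclJitu1.15}); the technical core is a rather delicate chain of estimates (the quantities $E_n$, $D_{n-1}$, $F$) whose success depends on the specific interplay between the rate $\varepsilon/\sqrt n$ from \eqref{hanclJitu1.10} and the comparison rate $1/(n\ln n)$, so the $\sqrt n$ in the hypothesis is used in an essential way. You instead work additively: the excess $v_n=q_{n,a}-\mu q_{n,b}$, the exact comparison solution $V_n$, and above all the observation that, after normalizing by $q_{n,b}$, the coefficients $b_nq_{n-1,b}/q_{n,b}$ and $q_{n-2,b}/q_{n,b}$ sum to $1$, so the second-order recursion collapses to the first-order one $z_n=g_n-\beta_n z_{n-1}$ with $\beta_n\le\frac12$; the sign analysis (no two consecutive negative $z_n$, hence $z_n^+\le g_n+\frac14 z_{n-2}^+$) then turns divergence of $\sum g_n$ into $y_n\to\infty$. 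I checked the collapse identity, the bounds $\beta_n\le\frac12$, $g_n\ge\frac{\mu}{2}(a_n/b_n-1)$, and the positive/negative-part bookkeeping, and they are all sound; the deferred steps are indeed routine. Your method buys something extra: it only needs $a_n\ge b_n$ for all large $n$ together with $\sum_n\big(\frac{a_n}{b_n}-1\big)=\infty$, which is strictly weaker than \eqref{hanclJitu1.10}, whereas the paper's proof is tied to the $\sqrt n$ rate. (This still does not settle the Question following the Lemma, since $\limsup_{n\to\infty}n(\frac{a_n}{b_n}-1)>0$ gives neither eventual domination $a_n\ge b_n$ nor divergence of the sum.) What the paper's approach buys in exchange is that it stays entirely within classical continued-fraction comparisons, exhibiting an explicit auxiliary continued fraction $c$ and a per-step growth factor for $q_{n,c}/q_{n,b}$.
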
 
\begin{Question}
Is that possible to substitute condition (\ref{hanclJitu1.10}) by  the condition $\limsup_{n\to\infty}n(\frac{a_n}{b_n}-1)>0$?
\end{Question}
\begin{Remark}
We cannot substitute condition (\ref{hanclJitu1.10}) by the weaker condition $\limsup_{n\to\infty}n^2(\frac{a_n}{b_n}-1)>0$. For example set $a_n=n^2+1$ and $b_n=n^2$ for all $n\in\mathbb N$. Then 
\begin{align}
\frac{q_{n,a}}{q_{n,b}}= \nonumber &\frac{(n^2+1)q_{n-1,a}+q_{n-2,a}}{n^2q_{n-1,b}+q_{n-2,b}}
=\frac{n^2+1}{n^2}\frac{q_{n-1,a}}{q_{n-1,b}}\left(\frac{1+\frac{q_{n-2,a}}{(n^2+1)q_{n-2,a}}}{1+\frac{q_{n-2,b}}{n^2q_{n-2,b}}}\right)\\ \nonumber
< & \Big(1+\frac{1}{n^2}\Big)^2\frac{q_{n-1,a}}{q_{n-2,b}}
< \dots <  \prod_{n=1}^\infty \Big(1+\frac{1}{n^2}\Big)^2= const<\infty .
\end{align}
\end{Remark}

\begin{Lemma}\label{hanclJitu1.l2}
Let $z_0, z_1,z_2,\dots,z_n$ be complex numbers such that for every $k=0,1,2,\dots,n$ we have 
\begin{equation}\label{hanclJitu1.l21}
| z_k|\geq2.
\end{equation}
Then
\begin{equation}\label{hanclJitu1.l22}
    \big| [z_0; z_1,z_2,\dots,z_n] \big|\geq1.
\end{equation}
\end{Lemma}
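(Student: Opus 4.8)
The plan is to proceed by downward induction on the index, tracking the modulus of each tail of the continued fraction. For $0 \le k \le n$ set $w_k = [z_k; z_{k+1}, \dots, z_n]$, so that $w_n = z_n$ and $w_k = z_k + 1/w_{k+1}$ for $k < n$. I claim that $|w_k| \ge 1$ for every $k$; taking $k = 0$ then yields \eqref{hanclJitu1.l22}.

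For the base case $k = n$ we have $|w_n| = |z_n| \ge 2 \ge 1$ by \eqref{hanclJitu1.l21}. For the inductive step, suppose $|w_{k+1}| \ge 1$; in particular $w_{k+1} \ne 0$, so $w_k$ is well-defined, and by the triangle inequality
\begin{equation*}
|w_k| = \Bigl| z_k + \frac{1}{w_{k+1}} \Bigr| \ge |z_k| - \frac{1}{|w_{k+1}|} \ge 2 - 1 = 1,
\end{equation*}
again using \eqref{hanclJitu1.l21}. This closes the induction. Note that it simultaneously shows that no division by zero occurs in forming $[z_0; z_1, \dots, z_n]$, so the expression is meaningful in the first place.

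There is essentially no obstacle here: the only thing to be careful about is to run the induction from the innermost term $z_n$ outward, since it is the lower bound on the tail $w_{k+1}$ — not on the head $z_k$ alone — that feeds the estimate for $w_k$. It is worth remarking that the constant $1$ is the best this argument can give, as the map $\rho \mapsto 2 - 1/\rho$ has $\rho = 1$ as its unique fixed point.
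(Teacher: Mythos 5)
Your proof is correct and matches the paper's (sketched) argument: the paper proves the lemma by induction using the inequality $|a+b|\ge \big||a|-|b|\big|$, which is exactly your estimate $|w_k|\ge |z_k|-1/|w_{k+1}|\ge 2-1=1$ on the tails. You simply spell out the details (base case, well-definedness, direction of the induction) that the paper leaves implicit.
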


\begin{Lemma}\label{laursen1.l1}
	Let $z_0, z_1,z_2,\dots,z_n$ be complex numbers with $\Re(z_k)>0$ for every $k=0,1,2,\dots,n$.
	Then
	\begin{equation*}
		|\Re\big([-z_0; -z_1,-z_2,\dots,-z_n]\big)| = \Re\big([z_0; z_1,z_2,\dots,z_n]\big) \ge \Re(z_0).
	\end{equation*}
\end{Lemma}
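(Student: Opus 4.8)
The plan is to reduce the two-part claim to a single inductive statement about continued fractions with all entries in the open right half-plane $H:=\{z\in\C:\Re(z)>0\}$.

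First I would dispose of the left-hand equality. A one-line induction on $n$ gives $[-z_0;-z_1,\dots,-z_n]=-[z_0;z_1,\dots,z_n]$: the case $n=0$ is trivial, and for the step
$[-z_0;-z_1,\dots,-z_n]=-z_0+\frac{1}{[-z_1;\dots,-z_n]}=-z_0-\frac{1}{[z_1;\dots,z_n]}=-[z_0;z_1,\dots,z_n]$
by the inductive hypothesis. Taking real parts and moduli yields $|\Re([-z_0;-z_1,\dots,-z_n])|=|\Re([z_0;z_1,\dots,z_n])|$, so it remains to prove that $\Re([z_0;z_1,\dots,z_n])\ge\Re(z_0)$ and that this quantity is positive, which then lets us drop the outer absolute value.

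The heart of the matter is the elementary observation that inversion preserves $H$: if $\Re(z)>0$ then $\frac1z=\frac{\overline z}{|z|^2}$ has $\Re(\frac1z)=\frac{\Re(z)}{|z|^2}>0$. I would then prove by induction on $n$ the statement: for any $z_0,\dots,z_n\in H$, the continued fraction $[z_0;z_1,\dots,z_n]$ is well defined (no partial denominator vanishes) and satisfies $\Re([z_0;z_1,\dots,z_n])\ge\Re(z_0)$. The base case $n=0$ is immediate. For the inductive step, put $w:=[z_1;z_2,\dots,z_n]$; by the inductive hypothesis $w$ is well defined with $\Re(w)\ge\Re(z_1)>0$, so in particular $w\ne0$, whence $[z_0;z_1,\dots,z_n]=z_0+\frac1w$ is well defined, and since $\Re(\frac1w)>0$ we get $\Re([z_0;z_1,\dots,z_n])=\Re(z_0)+\Re(\frac1w)\ge\Re(z_0)$.

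Combining the two parts completes the argument: $\Re([z_0;z_1,\dots,z_n])\ge\Re(z_0)>0$ shows that the middle expression in the statement equals $|\Re([z_0;z_1,\dots,z_n])|$, which by the first step equals $|\Re([-z_0;-z_1,\dots,-z_n])|$, so the displayed chain of equalities and the final inequality both hold. I do not expect any genuine obstacle here; the only point needing a little care is to carry the well-definedness of the continued fraction through the induction, and that is automatic because every element of $H$ is nonzero.
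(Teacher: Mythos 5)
Your proposal is correct and follows essentially the same route as the paper, which proves the lemma by induction on $n$ using the identity $\Re(a+b^{-1})=\Re(a)+\Re(b)/|b|^2$ — exactly the key fact in your inductive step; your explicit treatment of the sign identity $[-z_0;-z_1,\dots,-z_n]=-[z_0;z_1,\dots,z_n]$ and of well-definedness simply fills in details the paper's one-line sketch leaves implicit.
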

\section{Proofs}
\begin{proof}[Proof of Lemma \ref{hanclJitu1.l2} ]
 Lemma \ref{hanclJitu1.l2} can be proved by mathematical induction using the inequality $| a+b|\geq|| a|-| b||$, which holds for all complex numbers $a$ and $b$.
\end{proof}
\begin{proof}[Proof of Lemma \ref{laursen1.l1} ]
	Lemma \ref{laursen1.l1} can be proved by mathematical induction using that $\Re(a+b^{-1}) = \Re(a) + \Re(b)/|b|^2$, which holds for all non-zero complex numbers $a$ and $b$.
\end{proof}

\begin{proof}[Proof of Theorem \ref{hanclJitu1.T1}]

Let $\sigma_1,\dots ,\sigma_D$ be the set of embeddings of $\mathbb{K}$ into $\overline{\mathbb{Q}}$, where $\sigma_1$ is the identity. Then the number $\prod_{i=1}^D\sigma_i(x)$ is a rational number for every $x\in\mathbb K$. 
 
If $b_{n,j}\not=0$, then we have that $| \prod_{i=1}^D\sigma_i(b_{n,j}) |=A$ where $A$ is a positive integer. 
From this and (\ref{hanclJitu1.1}) we obtain that for every $i=1,\dots,D$
\begin{equation}\label{hanclJitu1.4a}
| \sigma_i(b_{n,j}) |=\frac{A}{| \prod_{I=1, I\not=i}^D\sigma_I(b_{n,j}) |}  \geq \frac{1}{\max(2^{(D-1)(\log_2 a_{n,j})^\gamma},2^{(D-1)d^{n\gamma}})}
\end{equation}
and similarly
\begin{equation}\label{hanclJitu1.4c}
| \sigma_i(d_{n,j}) |  \geq \frac{1}{\max(2^{(D-1)(\log_2 a_{n,j})^\gamma},2^{(D-1)d^{n\gamma}})}.
\end{equation}
Inequalities (\ref{hanclJitu1.0001}), (\ref{hanclJitu1.1}) and (\ref{hanclJitu1.4a}) yield
\begin{equation}\label{hanclJitu1.4b}
| S_{n,j}|=\Big| \frac{d_{n,j}a_{n,j}-c_{n,j}}{b_{n,j}}\Big| \leq 2a_{n,j}\max(2^{(D\log_2 a_{n,j})^\gamma},2^{Dd^{n\gamma}}).
\end{equation}

Suppose that the numbers $\alpha_1,\alpha_2,\dots,\alpha_M$ and $1$ are linearly dependent over $\mathbb K$. Then there exist ${A_1,A_2,\dots,A_M}\in\mathbb K$, not all equal to zero such that ${\sum_{j=1}^M{A_j\alpha_j}}\in\mathbb K$. Let us write
${\sum_{j=1}^M{A_j\alpha_j}=y } $ 
where $y\in\mathbb K$. Therefore
$\sum_{j=1}^M{A_j\alpha_j}-y=0.$
Let $n_0$ and $n$ be sufficiently large and such that $n\geq n_0$. Then we have 
$$\sum_{j=1}^M{A_j\alpha_j}-y=\sum_{j=1}^M {A_j\Big(\alpha_j-\frac{p_{n,j}}{q_{n,j}}\Big)}+\sum_{j=1}^M A_j \frac{p_{n,j}}{q_{n,j}}-y=0.$$
Hence 
\begin{equation} \label{hanclJitu1.5}
\sum_{j=1}^M A_j \frac{p_{n,j}}{q_{n,j}}-y=-\sum_{j=1}^M {A_j\Big(\alpha_j-\frac{p_{n,j}}{q_{n,j}}\Big)}.
\end{equation}
Now we prove that for all large $n$ we have
\begin{equation}             \label{hanclJitu1.6}
\bigg|\sum_{j=1}^MA_j\frac{p_{n,j}}{q_{n,j}}-y\bigg|>0.
\end{equation}
Without loss of generality let $M^*$ be an integer such that $A_M=\cdots=A_{M^*+1}=0$ and $A_{M^*}\neq 0$. From this and \eqref{hanclJitu1.0}, we obtain that
\begin{align*}
\sum_{j=1}^M {A_j(\alpha_j-\frac{p_{n,j}}{q_{n,j}})} &=\sum_{j=1}^{M^*} {A_j(\alpha_j-\frac{p_{n,j}}{q_{n,j}})} \\
& =\sum_{j=1}^{M^*} \frac{A_j(-1)^n}{q_{n,j}^2(a_{n+1,j}+[0;a_{n+2,j},a_{n+3,j},\dots]+[0;a_{n,j},\dots,a_{1,j}])}.
\end{align*}
This and Lemma \ref{hanclJitu1.l1} yield
\begin{align*}
&\bigg|\sum_{j=1}^{M} A_j\Big(\alpha_j-\frac{p_{n,j}}{q_{n,j}}\Big)\bigg|	\\
& \geq  \frac{|A_{M^*}|}{q_{n,M^*}^2(a_{n+1,M^*}+[0;a_{n+2,j},a_{n+3,j},\dots]+[0;a_{n,j},\dots,a_{1,j}])}-\sum_{j=1}^{M^*-1}\frac{|A_j|}{q_{n,j}^2a_{n+1,j}}\\
&\geq  \frac{|A_{M^*}|}{3q_{n,M^*}^2a_{n+1,M^*}}-\sum_{j=1}^{M^*-1}\frac{|A_j|}{q_{n,j}^2a_{n+1,j}} \\
&=  \frac{|A_{M^*}|}{3q_{n,M^*}^2a_{n+1,M^*}}\bigg(1-\sum_{j=1}^{M^*-1}\frac{\frac{3|A_j|}{|A_{M^*}|}}{(\frac{q_{n,j}}{q_{n,M^*}})^2\frac{a_{n+1,j}}{a_{n+1,M^*}}}\bigg)> 0. 
\end{align*}
This and  (\ref{hanclJitu1.5}) yield (\ref{hanclJitu1.6}). 

Using (\ref{hanclJitu1.0}) yields that
\begin{align}
\nonumber \bigg|\sum_{j=1}^{M} A_j(\alpha_j-\frac{p_{n,j}}{q_{n,j}})\bigg| &=\bigg|\sum_{j=1}^{M^*} A_j(\alpha_j-\frac{p_{n,j}}{q_{n,j}})\bigg| \\ \nonumber &=\bigg|\sum_{j=1}^{M^*} \frac{A_j(-1)^n}{q_{n,j}^2(a_{n+1,j}+[0;a_{n+2,j},a_{n+3,j},\dots]+[0;a_{n,j},\dots,a_{1,j}])}\bigg| \\ \label{hanclJitu1.7}  
& \leq\sum_{j=1}^{M^*}\frac{\max_{j=1,2,\dots,M^*}(| A_j|)}{q_{n,j}^2a_{n+1,j}}.
\end{align}
From this, Lemma \ref{hanclJitu1.l1} and (\ref{hanclJitu1.2}) we obtain that
\begin{align*}
 \bigg|\sum_{j=1}^M A_j(\alpha_j-\frac{p_{n,j}}{q_{n,j}^2})\bigg|\leq & \max_{j=1,,\dots,M}(| A_j |)\sum_{j=1}^M\frac{1}{q_{n,j}^2a_{n+1,j}} \\ 
=& \frac{\max_{j=1,2,\dots,M}(| A_j |)}{q_{n,M}^2a_{n+1,M}}\bigg(1+\sum_{j=1}^{M-1}(\frac{q_{n,M}}{q_{n,j}})^2\frac{a_{n+1,M}}{a_{n+1,j}}\bigg) \\ 
\leq & \frac{M\max_{j=1,2,\dots,M}| A_j |}{q_{n,M}^2a_{n+1,M}}
=\frac{c}{q_{n,M}^2a_{n+1,M}} 
\end{align*}
where $c=M\max_{j=1,2,\dots,M}| A_j |$ is  a constant which does not depend on $n$.
This, (\ref{hanclJitu1.5}), and (\ref{hanclJitu1.6}) yield
$$0<\bigg|\sum_{j=1}^MA_j\frac{p_{n,j}}{q_{n,j}}-y\bigg|< \frac{c}{q_{n,M}^2a_{n+1,M}}.$$
It implies that
$$\prod_{i=1}^D\bigg| \sigma_i\bigg(\sum_{j=1}^MA_j\frac{p_{n,j}}{q_{n,j}}-y\bigg)\bigg|\leq \prod_{i=2}^D\bigg| \sigma_i\bigg(\sum_{j=1}^MA_j\frac{p_{n,j}}{q_{n,j}}-y\bigg)\frac{c}{q_{n,M}^2a_{n+1,M}}. $$
Hence,
\begin{equation}\label{hanclJitu1.e8}
\frac{c\; \big|\prod_{i=2}^D\sum_{j=1}^M (\sigma_i(A_j\frac{p_{n,j}}{q_{n,j}})-\sigma_i(y))\big|}{q_{n,M}^2a_{n+1,M}}\geq
\bigg|\prod_{i=1}^D\sum_{j=1}^M \Big(\sigma_i\Big(A_j\frac{p_{n,j}}{q_{n,j}}\Big)-\sigma_i(y)\Big)\bigg|.
\end{equation}
From (\ref{hanclJitu1.6}) and by Galois theory the number on the right is a rational number 
$\frac{a}{b},(a,b)=1, a,b \in\mathbb Z^+$ such that there exist a constant $C$ which does not depend on $n$ and such that  
\begin{align*}
0<b&\leq C \prod_{i=1}^D \prod_{j=1}^M \bigg|\sigma_i\bigg(\bigg(\prod_{k=1}^n d_{k,j}\bigg)q_{n,j}\bigg)\bigg| \\
&=C\prod_{i=1}^D\prod_{j=1}^M |\sigma_i(q_{n,j})|\prod_{k=1}^n| \sigma_i(d_{k,j}) |.
\end{align*}
This and (\ref{hanclJitu1.1}) yield
\begin{equation}
\label{hanclJitu1.09} 0<b\leq C \prod_{i=1}^D\prod_{j=1}^M |\sigma_i(q_{n,j})| \prod_{k=1}^n\max (2^{(\log_2 a_{k,j})^\gamma},2^{d^{k\gamma}}).
\end{equation}
Write $R_{k,j} = \max(2^{(\log_2 a_{k,j})^\gamma}, 2^{d^{k\gamma}})$ and notice that $\house{1/a_{k,j}}\le R_{k,j}$ regardless of whether we assume \eqref{hanclJitu1.00001} or \eqref{laursen1.1}.
Combining this with inequality (\ref{hanclJitu1.0001}), we find
\begin{align*}
|\sigma_i(q_{n,j})|&=|\sigma_i(a_{n,j}q_{n-1,j}+q_{n-2,j})| \\
&\leq|\sigma_i(a_{n,j})||\sigma_i(q_{n-1,j})|+|\sigma_i(q_{n-2,j})|	\\
&\leq |\sigma_i(q_{n-2,j})|\big(	|\sigma_i(a_{n,j})||\sigma_i(a_{n-1,j})| +1	\big) + |\sigma_i(a_{n,j})||\sigma_i(q_{n-3,j})|	\\
&< (1+R_{n,j} R_{n-1,j})
|\sigma_i(a_{n,j})|\big(|\sigma_i(a_{n-1,j})||\sigma_i(q_{n-2,j})|+|\sigma_i(q_{n-3,j})|\big)	\\
&<\cdots< \prod_{k=1}^n (1+R_{k,j} R_{k-1,j})|\sigma_i(a_{k,j})| < 2^n R_{n,j}\prod_{k=1}^{n-1} R_{k,j}^2|\sigma_i(a_{k,j})|
\\&
<\frac{\prod_{k=1}^{n} R_{k,j}^2|\sigma_i(a_{k,j})|}{C^{1/DM}}
= \frac{\prod_{k=1}^{n} \max(2^{(\log_2 a_{k,j})^\gamma}, 2^{d^{k\gamma}})|\sigma_i(a_{k,j})|}{C^{1/DM}}.
\end{align*}
This and (\ref{hanclJitu1.09}) yield
\begin{align*}
0<b&\leq C \prod_{i=1}^D\prod_{j=1}^M |\sigma_i(q_{n,j})|\prod_{k=1}^n\max (2^{(\log_2 a_{k,j})^\gamma},2^{d^{k\gamma}})\\
&< \prod_{i=1}^D\prod_{j=1}^M \prod_{k=1}^n|\sigma_i(a_{k,j})| \max (2^{(\log_2 a_{k,j})^\gamma},2^{d^{k\gamma}})^3.
\end{align*}
From this, (\ref{hanclJitu1.0001}) and (\ref{hanclJitu1.4c}) we obtain that 
\begin{align*}
0<b&\leq \prod_{i=1}^D\prod_{j=1}^M\prod_{k=1}^n
\frac{|\sigma_i(S_{k,j}b_{k,j}+c_{k,j})|}{| \sigma_i(d_{k,j})|}\max(2^{(\log_2a_{k,j})^\gamma} ,2^{d^{k\gamma}})^3 \\
&\leq \prod_{i=1}^D \prod_{j=1}^M \prod_{k=1}^n\left(| S_{k,j}||\sigma_i(b_{k,j})|+|\sigma_i(c_{k,j})|\right)
\max(2^{(\log_2 a_{k,j})^\gamma},2^{d^{k\gamma}})^{D+2}.
\end{align*}
This and (\ref{hanclJitu1.1}) imply that
\begin{equation*}
0<b< \prod_{i=1}^D \prod_{j=1}^M \prod_{k=1}^n\left(| S_{k,j}|+1\right) \max(2^{(\log_2 a_{k,j})^\gamma},2^{d^{k\gamma}})^{D+3}.
\end{equation*}
This and (\ref{hanclJitu1.4b}) yield
\begin{align*}
 0<b&<\prod_{i=1}^D \prod_{j=1}^M \prod_{k=1}^n\left(2a_{k,j}\max(2^{(D\log_2 a_{k,j})^\gamma},2^{Dd^{k\gamma}})+1\right)\times \\
&\quad\ \max(2^{(\log_2 a_{k,j})^\gamma},2^{d^{k\gamma}})^{D+3} \\
&\leq \prod_{i=1}^D \prod_{j=1}^M \prod_{k=1}^n \left(a_{k,j} \max(2^{(\log_2 a_{k,j})^\gamma},2^{d^{k\gamma}})^{5D}\right) 
\end{align*}
This and (\ref{hanclJitu1.2}) imply that
\begin{align}	
	\nonumber
	0<b&< \prod_{i=1}^D \prod_{j=1}^M \prod_{k=1}^n \left(a_{k,1} \max(2^{(\log_2 a_{k,1})^\gamma},2^{d^{k\gamma}})^{5D}\right) 
	\\&\label{hanclJitu1.021}
	= \prod_{k=1}^n \left(a_{k,1}^{DM} \max(2^{(\log_2 a_{k,1})^\gamma},2^{d^{k\gamma}})^{5D^2M}\right) 
\end{align}

We now calculate
\begin{align}
	\nonumber |\sigma_i(q_{n,j})|=&|\sigma_i(a_{n,j})\sigma_i(q_{n-1,j})+\sigma_i(q_{n-2,j}) | \\ \nonumber
	= & |\sigma_i(q_{n-1,j})|\bigg|\sigma_i(a_{n,j})+\frac{\sigma_i(q_{n-2,j})}{\sigma_i(q_{n-1,j})} \bigg| \\ \nonumber
	=& |\sigma_i(q_{n-1,j})||\sigma_i(a_{n,j})+[0;\sigma_i(a_{n-1,j}),\sigma_i(a_{n-2,j}),\dots,\sigma_i(a_{1,j})]| \\ \label{eq:sigma i qnj}
	=& |\sigma_i(q_{n-1,j})||[\sigma_i(a_{n,j});\sigma_i(a_{n-1,j}),\sigma_i(a_{n-2,j}),\dots,\sigma_i(a_{1,j})]|.
\end{align}
If \eqref{hanclJitu1.00001} is satisfied, we then apply Lemma \ref{hanclJitu1.l2} to \eqref{eq:sigma i qnj} and find
\begin{equation*}
	|\sigma_i(q_{n,j})|\geq  |\sigma_i(q_{n-1,j})|\geq |\sigma_i(q_{n-2,j})|\geq\dots\geq|\sigma_i(q_{0,j})|=1,
\end{equation*}
while \eqref{laursen1.1} would allow us to apply Lemma \ref{laursen1.l1} to \eqref{eq:sigma i qnj} and obtain
\begin{align*}
	|\sigma_i(q_{n,j})| &\ge |\sigma_i(q_{n-1,j})| |\Re([\sigma_i(a_{n,j});\sigma_i(a_{n-1,j}),\sigma_i(a_{n-2,j}),\dots,\sigma_i(a_{1,j})])|
	\\&
	\ge |\sigma_i(q_{n-1,j})| |\Re \sigma_i(a_{n,j})|
	\ge \frac{|\sigma_i(q_{n-1,j})|}{\max(2^{(\log_2 a_{k,1})^\gamma},2^{d^{k\gamma}})}
	\\&
	\ge\cdots\ge \prod_{k=1}^{n} \max(2^{(\log_2 a_{k,j})^\gamma},2^{d^{k\gamma}})^{-1}
\end{align*}
Whether \eqref{hanclJitu1.00001} or \eqref{laursen1.1} is true, this and (\ref{hanclJitu1.01}) then yield
\begin{align}
	\nonumber & \left| \prod_{i=2}^D\left(\sum_{j=1}^M(\sigma_i(A_j\frac{p_{n,j}}{q_{n,j}})-\sigma_i(y))\right)\right| \\ \nonumber
	&=\left| \prod_{i=2}^D\left(\sum_{j=1}^M(\sigma_i(A_j\sum_{k=1}^n\frac{(-1)^{k+1}}{q_{k,j}q_{k-1,j}}-\sigma_i(y)))\right)\right| \\ \nonumber
	&\leq \prod_{i=2}^D\left(\sum_{j=1}^M|\sigma_i(A_j)|\sum_{k=1}^n\frac{1}{|\sigma_i(q_{k,j})||\sigma_i(q_{k-1,j})|}\right)
	\\&\nonumber
	\le \prod_{i=2}^D\left(\sum_{j=1}^M|\sigma_i(A_j)|\sum_{k=1}^n \max(2^{(\log_2 a_{k,j})^\gamma},2^{d^{k\gamma}}) \prod_{l=1}^{k-1}\max(2^{(\log_2 a_{l,j})^\gamma},2^{d^{l\gamma}})^2\right)
	\\&\label{hanclJitu1.011}
	\leq \prod_{k=1}^{n}\max(2^{(\log_2 a_{k,1})^\gamma},2^{d^{k\gamma}})^{2D},
\end{align}
for all sufficiently large $n$.

From (\ref{hanclJitu1.e8}),(\ref{hanclJitu1.021}), and (\ref{hanclJitu1.011}), we obtain that for all sufficiently large $n$,

\begin{align*}
	q_{n,M}^2a_{n+1,M} &< \frac{1}{c}\prod_{k=1}^{n} \left(a_{n,1}^{DM} \max(2^{(\log_2 a_{k,1})^\gamma},2^{d^{k\gamma}})^{7D^2 M}\right)
	\\&
	\le \Bigg(\prod_{k=1}^{n} a_{n,1}^{DM}\Bigg) \Bigg(\prod_{k=1}^{n} 2^{7D^2 M(\log_2 a_{k,1})^\gamma} \Bigg) \Bigg(\prod_{k=1}^{n} 2^{7D^2 Md^{k\gamma}} \Bigg)
	\\&
	= 2^{7 D^2M \frac{d^{\gamma (n+1)}}{d^\gamma-1}} \Bigg(\prod_{k=1}^{n} a_{n,1}^{DM}\Bigg) \Bigg(\prod_{k=1}^{n} 2^{7D^2 M(\log_2 a_{k,1})^\gamma} \Bigg)
	\\&
	\le 2^{7 D^2M \frac{d^{\gamma (n+1)}}{d^\gamma-1}} \Bigg(\prod_{k=1}^{n} a_{n,1}^{DM}\Bigg) \Bigg(\prod_{k=1}^{n} 2^{7D^2 M(\log_2 a_{k,1})^\gamma} \Bigg)
\end{align*}
From this and (\ref{hanclJitu1.3}) we obtain that
\begin{align*}
	\nonumber q_{n,M}^2a_{n+1,M}&< 2^{7 D^2M \frac{d^{\gamma (n+1)}}{d^\gamma-1}} \left(\prod_{k=1}^n\max(a_{k,M}2^{(\log_2a_{k,M})^\gamma},2^{d^{k\gamma}})^{DM}\right)\times \\  
	&\quad \left(\prod_{k=1}^n 2^{7D^2M(\log_2 \max(a_{k,M}2^{(\log_2a_{k,M})^\gamma},2^{d^{k\gamma}}))^\gamma}\right).
\end{align*}
Set $b_k=\max\{a_{k,M},2^{d^{k\gamma}}\}$.
From this and (\ref{hanclJitu1.021}) we obtain for sufficiently large $n$ that 
\begin{align}
	\nonumber q_{n,M}^2a_{n+1,M}&< 2^{7 D^2M \frac{d^{\gamma (n+1)}}{d^\gamma-1}} \left( \prod_{k=1}^n b_k 2^{(\log_2 b_k)^\gamma} \right)^{DM} \left( \prod_{k=1}^n2^{8D^2M(\log_2 b_k)^\gamma} \right) \\ \label{hanclJitu1.010}
	&\leq 2^{7 D^2M \frac{d^{\gamma (n+1)}}{d^\gamma-1}} \left(\prod_{k=1}^n b_k\right)^{DM}\left(\prod_{k=1}^n 2^{8D^2M(\log_2 b_k)^\gamma}\right).
\end{align}

Inequality (\ref{hanclJitu1.001}) implies that 
\begin{align*}
q_{n,M}&>\prod_{k=1}^n a_{k,M}\geq\prod_{k=1}^nb_k\prod_{k=1}^n 2^{-d^{k\gamma}}
\ge 2^{-\frac{d^{\gamma (n+1)}}{d^\gamma-1}}\prod_{k=1}^nb_k
.
\end{align*}
This and inequality \eqref{hanclJitu1.010} imply

\begin{equation}\label{hanclJitu1.013}
	a_{n+1,M} < 2^{9 D^2M \frac{d^{\gamma (n+1)}}{d^\gamma-1}} \left(\prod_{k=1}^n b_k\right)^{d-1}\left(\prod_{k=1}^n 2^{8D^2M(\log_2 b_k)^\gamma}\right)
\end{equation}

Now the proof falls into two cases:

\textbf{Case 1}

Assume that there is  $\delta>0$ such that  
\begin{equation}\label{hanclJitu1.014}
  \limsup_{n\rightarrow\infty}
  b_n^{\frac{1}{(d+\delta)^n}} = \infty.
\end{equation}
From this and Borel's theorem we obtain that there exist infinitely many  $N$ such that
$$b_{N+1}^{\frac{1}{(d+\delta)^{N+1}}} > \Big(1+\frac{1}{N^2}\Big)\max_{k=1,\ldots,N}
  b_k^{\frac{1}{(d+\delta)^k}}.
  $$
Therefore  
 \begin{equation} \label{eq:largehJ}
 b_{N+1}>\left(1+\frac{1}{N^2}\right)^{(d+\delta)^{N+1}}\left(\max_{k=1,\ldots,N}
  b_k^{\frac{1}{(d+\delta)^k}}\right)^{(d+\delta)^{N+1}}>2^{d^{N+1}}
 \end{equation}
and 
\begin{align}
\nonumber b_{N+1}&>\left(1+\frac{1}{N^2}\right)^{(d+\delta)^{N+1}} \left(\max_{k=1,\dots,N} b_k^{\frac{1}{(d+\delta)^k}}
  \right)^{(d+\delta)^{N+1}} \\ \nonumber
&> \left(1+\frac{1}{N^2}\right)^{(d+\delta)^{N+1}}\left(\max_{k=1,\dots,N} b_k^{\frac{1}{(d+\delta)^k}}
  \right)^{(d+\delta-1)((d+\delta)^N+(d+\delta)^{N-1}+\dots+1)} \\ \label{hanclJitu1.015}
 &>2^{\frac{1}{N^3}(d+\delta)^{N+1}} \left (\prod_{k=1}^N b_k \right)^{d+\delta-1}.
\end{align}
hold for infinitely many $N$. From inequatity  (\ref{eq:largehJ}) and the fact that $b_{n+1}=\max(a_{N+1,M},2^{d\gamma(N+1)})$ we obtain that $b_{N+1}=a_{N+1,M}.$
This and (\ref{hanclJitu1.015}) imply that 
$$a_{N+1,M}>\left(\prod_{k=1}^N b_k\right)^{d-1}\left(\prod_{k=1}^N b_k\right)^\delta 2^{\frac{1}{N^3}(d+\delta)^{N+1}}$$
a contradiction with (\ref{hanclJitu1.013}). 

\textbf{Case 2}

Suppose that there is no $\delta>0$ such that (\ref{hanclJitu1.014}) holds. Then for every $\delta>0$, there exist $n_1$ such that for every $n>n_1$, we have 
\begin{equation}\label{hanclJitu1.016}
b_n<2^{(d+\delta/2)^n}. 
\end{equation}
This implies that for all sufficiently large $n$,
\begin{align} 
\nonumber \prod_{k=1}^n 2^{8D^2M(\log_2 b_k)^\gamma}&\le \prod_{k=1}^n 2^{8D^2M (\log_2{2^{(d+\delta)^k}})^\gamma}
=\prod_{k=1}^n 2^{8D^2M(d+\delta)^{k\gamma}}\\ \label{hanclJitu1.017}
&=2^{8D^2M\frac{(d+\delta)^{\gamma (n+1)}-(d+\delta)^{\gamma}}{(d+\delta)^\gamma-1}}
\leq 2^{8D^2 M\frac{(d+\delta)^{\gamma (n+1)}}{(d+\delta)^\gamma-1}}.
\end{align}
Borel's theorem and (\ref{hanclJitu1.4}) yield that for infinitely many $N$,
$$b_{N+1}^{\frac{1}{d^{N+1}}}>\left(1+\frac{1}{N^2}\right) \max_{k=1,2,\dots,N}b_k^{\frac{1}{d^k}}$$
holds. It implies that
\begin{align}
\nonumber b_{N+1}>&\left(1+\frac{1}{N^2}\right)^{d^{N+1}}\left( \max_{k=1,2,\dots,N}b_k^{\frac{1}{d^k}}\right)^{d^{N+1}}\\ \nonumber
>& \left(1+\frac{1}{N^2}\right)^{d^{N+1}}\left(\max_{k=1,2,\dots,N}b_k^{\frac{1}{d^k}}\right)^{(d-1)(d^N+d^{N-1}+\dots+1)} \\ \nonumber
>& \left(1+\frac{1}{N^2}\right)^{d^{N+1}}\left(\prod_{k=1}^N b_k\right)^{d-1}.
\end{align}
Therefore
$$b_{N+1}>2^{\frac{1}{N^3}d^{N+1}}\left(\prod_{k=1}^N b_k\right)^{d-1}.$$
From this and the fact that $b_{N+1}=\max(a_{N+1,M},2^{d^{(N+1)\gamma}})$
we obtain that  $b_{N+1}=a_{N+1,M}$.
Hence
$$a_{N+1,M}>2^{\frac{1}{N^3}d^{N+1}}\left(\prod_{k=1}^N b_k\right)^{d-1}.$$
This and (\ref{hanclJitu1.017}) yield
\begin{align}
\nonumber a_{N+1,M}>& 2^{\frac{1}{N^3}d^{N+1}}\left(\prod_{k=1}^N b_k \right)^{d-1} \left(\prod_{k=1}^N 2^{7D^2M(\log_2 b_k)^\gamma}\right)\left(\prod_{k=1}^N 2^{7D^2M(\log_2 b_k)^\gamma}\right)^{-1}\\ \nonumber
\geq & 2^{\frac{1}{N^3}d^{N+1}-8D^2M\frac{(d+\delta)^{\gamma(N+1)}}{(d+\delta)^\gamma-1}}\left(\prod_{k=1}^N b_k \right)^{d-1}
\left(\prod_{k=1}^N 2^{8D^2M(\log_2 b_k)^\gamma}\right),
\end{align}
which contradicts (\ref{hanclJitu1.013}) for a sufficiently small choice of $\delta$.
\end{proof}

\begin{proof}[Proof of Theorem \ref{hanclJitu1.T2}]

We follow the proof of Theorem \ref{hanclJitu1.T1} and the exception will be only the lower estimation of partial denominaters for the continued fractions $ \alpha_2=[0;-\sqrt 2 a_1, -\sqrt2 a_2, \dots ]$ and $ \alpha_3=[0;-\sqrt 3 a_1, -\sqrt3 a_2, \dots ]$. For $\alpha_2$  we have 
\begin{align}
\nonumber q_{n+1,\alpha_2}=& -\sqrt2 a_{n+1,\alpha_2}q_{n,\alpha_2}+q_{n-1,\alpha_2} \\ \nonumber
=&-\sqrt2 a_{n+1,\alpha_2}q_{n,\alpha_2}\bigg(1+\frac{q_{n-1,\alpha_2}}{-\sqrt2 a_{n+1,\alpha_2}q_{n,\alpha_2}}\bigg) \\ \nonumber 
=&-\sqrt2 a_{n+1,\alpha_2}q_{n,\alpha_2}\bigg(1+\frac{1}{-\sqrt2 a_{n+1,\alpha_2}}\big[0;-\sqrt 2 a_n, -\sqrt2 a_{n-1}, \dots,-\sqrt2 a_1 \big]\bigg).
\end{align}
Hence
\begin{align}
\nonumber |q_{n+1,\alpha_2}|=& \sqrt2 a_{n+1,\alpha_2}|q_{n,\alpha_2}|\bigg(1+\frac{1}{\sqrt2 a_{n+1,\alpha_2}}[0;\sqrt 2 a_n, \sqrt2 a_{n-1}, \dots,\sqrt2 a_1 ]\bigg) \\ \nonumber 
\geq & |q_{n,\alpha_2}|\geq \dots\geq |q_{1,\alpha_2}|=1
\end{align}
and (\ref{hanclJitu1.011}) follows. Similarly for $\alpha_3$. 
\end{proof}

\begin{proof}[Proof of Corollary \ref{hanclJitu1.C2}]
Corollary \ref{hanclJitu1.C2} is an immediate consequence of Theorem \ref{hanclJitu1.T1}  when we set $D=1$ and $M=K$.
\end{proof}

\begin{proof}[Proof of Example \ref{hanclJitu1.Ex3}]
Example \ref{hanclJitu1.Ex3} is the immediate consequence of Corollary \ref{hanclJitu1.C2} if we set $K=3$.
\end{proof}

\begin{proof}[Proof of Corollary \ref{hanclJitu1.C3}]
Corollary \ref{hanclJitu1.C3} is the immediate consequence of Theorem \ref{hanclJitu1.T1}  if we set $D=M=K$.
\end{proof}

\begin{proof}[Proof of Example \ref{hanclJitu1.Ex1}]
Example \ref{hanclJitu1.Ex1} is the immediate consequence of Corollary \ref{hanclJitu1.C3} when we set $K=2$ and $P(x)=x^2-8x+14$.
\end{proof}

\begin{proof}[Proof of Example \ref{laursen.Ex}]
	This is an immediate consequence of Theorem \ref{hanclJitu1.01} if we set $D=\deg \alpha$, since $\varphi^{2n}a_{n+j}$ clearly satisfies \eqref{laursen1.1}.
\end{proof}

\begin{proof}[Proof of Corollary \ref{hanclJitu1.C4}]
This is the immediate consequence of Theorem \ref{hanclJitu1.T1}  if we set $D=1$, $K=M$ and the fact that $c=\lim_{n\to\infty}(-\log n+ \sum_{j=1}^n\frac{1}{j})$ is the Euler–Mascheroni constant.
\end{proof}

\begin{proof}[Proof of Corollary \ref{hanclJitu1.C5}]
This is the immediate consequence of Theorem \ref{hanclJitu1.T1}  if we set $D=1$, $K=M$ and the fact that $\lim_{n\to\infty}\frac{\pi(n)}{\frac{n}{\log n}}=1$.
\end{proof}

\begin{proof}[Proof of Example \ref{hanclJitu1.Ex2}]
This is the immediate consequence of Theorem \ref{hanclJitu1.T1}  if we set $D=M=2$ and the well-known facts that $\liminf_{n\to\infty} d(n)=2$ and $\limsup_{n\to\infty}\frac{\log d(n)\log \log n}{\log n}=\log 2$.
\end{proof}

\begin{proof}[Proof of Lemma \ref{hanclJitu1.l1}]

From (\ref{hanclJitu1.10}) we obtain that there exists positive real number $\varepsilon$ and positive integer $n_0$ such that for every positive integer $n\geq n_0-4$ we have
\begin{equation}          \label{hanclJitu1.12}
a_n\geq\bigg(1+\frac {\varepsilon}{\sqrt {n}}\bigg)b_n,
\end{equation}
\begin{equation}          \label{hanclJitu1.13}
\frac{\varepsilon^2-\frac {\varepsilon}{\sqrt{n}+\sqrt{n+1}}}{\sqrt{n+1}(\sqrt {n}+\varepsilon)}-\frac{2}{(n+1)\ln(n+1)}>0.
\end{equation}
and the function
$$f(x)=\frac{\varepsilon^2-\frac {\varepsilon}{\sqrt{x}+\sqrt{x+1}}}{\sqrt{x+1}(\sqrt {x}+\varepsilon)}$$
is decreasing for $x>n_0$.

Set $c=[a_0,a_1,a_2,\dots,a_{n_0-5},(1+\frac {\varepsilon}{\sqrt {n_0-4}})b_{n_0-4},(1+\frac {\varepsilon}{\sqrt {n_0-3}})b_{n_0-3},\dots]$ and $q_{n,c}$ the denominator of its $n$-th partial. 
Then for all positive integers $n$ we have 
\begin{equation}          \label{hanclJitu1.14}
\frac{q_{n,a}}{q_{n,b}}\geq\frac{q_{n,c}}{q_{n,b}}.
\end{equation}
To prove Lemma \ref{hanclJitu1.l1} we prove that for every large $n$ we have 
\begin{equation}          \label{hanclJitu1.15}
\frac{q_{n+1,c}}{q_{n+1,b}}\geq \left(1+\frac{1}{(n+1)\ln{(n+1)}}\right)\frac{q_{n,c}}{q_{n,b}}.
\end{equation}
Then this, (\ref{hanclJitu1.14}) and the fact that 
$\prod_{j=n_0+1}^\infty (1+\frac{1}{j\ln j})=\infty$ imply that 
\begin{align*}
\lim_{n\to\infty}\frac{q_{n,a}}{q_{n,b}}&\geq \lim_{n\to\infty}\frac{q_{n,c}}{q_{n,b}}\geq
\lim_{n\to\infty} \Big(1+\frac{1}{n\ln{n}}\Big)\frac{q_{n-1,c}}{q_{n-1,b}} \\
& \geq \dots \geq \lim_{n\to\infty} \left(\prod_{j=n_0+1}^{n+1} (1+\frac{1}{j\ln j})\right) \frac{q_{n_0,c}}{q_{n_0,b}} \\
&=\frac{q_{n_0,c}}{q_{n_0,b}} \prod_{j=n_0+1}^\infty \Big(1+\frac{1}{j\ln j}\Big) =\infty 
\end{align*}
and  (\ref{hanclJitu1.11}) follows. 

To prove  (\ref{hanclJitu1.15}) let us set  $x_n=1+\frac {\varepsilon}{\sqrt {n}}$ and 
$y_n=1+\frac{1}{n\ln{n}}$. Then we have 
\begin{align}
\nonumber \frac{q_{n+1,c}}{q_{n+1,b}}=& \frac{x_{n+1}b_{n+1}q_{n,c}+q_{n-1,c}}{b_{n+1}q_{n,b}+q_{n-1,b}} \\ \nonumber
=& \frac{(x_{n+1}-y_{n+1}+y_{n+1})b_{n+1}q_{n,c}+q_{n-1,c}}{b_{n+1}q_{n,b}+q_{n-1,b}} \\ \nonumber
=& \frac{y_{n+1}b_{n+1}q_{n,c}\left(\frac{x_{n+1}-y_{n+1}}{y_{n+1}}+1+\frac{1}{b_{n+1}y_{n+1}\frac{q_{n,c}}{q_{n-1,c}}}\right)}
{b_{n+1}q_{n,b}\left(1+\frac{1}{b_{n+1}\frac{q_{n,b}}{q_{n-1,b}}}\right)}.
\end{align}
This yields that it is enough to prove that
$$\frac{x_{n+1}-y_{n+1}}{y_{n+1}}+1+\frac{1}{b_{n+1}y_{n+1}\frac{q_{n,c}}{q_{n-1,c}}}\geq
1+\frac  1{b_{n+1}\frac{q_{n,b}}{q_{n-1,b}}}
$$
and this is equivalent to
\begin{equation}            \label{hanclJitu1.16}
A= \frac{x_{n+1}-y_{n+1}}{y_{n+1}} + \frac{1}{b_{n+1}}
\left(\frac 1{y_{n+1}(x_nb_n+\frac{q_{n-2,c}}{q_{n-1,c}})}-\frac{1}{b_n+\frac{q_{n-2,b}}{q_{n-1,b}}} \right)\geq 0.
\end{equation}
Now we have
$$ \frac{1}{y_{n+1}({x_n}{b_n}+\frac{q_{n-2,c}}{q_{n-1,c}})}-
\frac{1}{b_n+\frac{q_{n-2,b}}{q_{n-1,b}}} =
\frac{b_n+{\frac{q_{n-2,b}}{q_{n-1,b}}-y_{n+1}x_nb_n-\frac{q_{n-2,c}}{q_{n-1,c}}{y_{n+1}}}} 
{({y_{n+1}x_nb_n+\frac{q_{n-2,c}}{q_{n-1,c}}{y_{n+1}})(b_n+\frac{q_{n-2,b}}{q_{n-1,b}}})}
$$
\begin{equation}            \label{hanclJitu1.17}
=\frac{1-y_{n+1}x_n+\frac{1}{b_n}(\frac{q_{n-2,b}}{q_{n-1,b}}-\frac{q_{n-2,c}}{q_{n-1,c}}y_{n+1})}
{(y_{n+1}x_n+\frac 1{b_n}\frac{q_{n-2,c}}{q_n-1,c}y_{n+1})(b_n+\frac{q_{n-2,b}}{q_{n-1,b}})}.
\end{equation}
This, the fact that $1-y_{n+1}x_n<0$ and $b_{n+1}\geq 1$ imply that 
$$\frac{1}{y_{n+1}({x_n}{b_n}+\frac{q_{n-2,c}}{q_{n-1,c}})}-\frac{1}{b_n+\frac{q_{n-2,b}}{q_{n-1,b}}} =
\frac{1-y_{n+1}x_n+\frac{1}{b_n}(\frac{q_{n-2,b}}{q_{n-1,b}}-\frac{q_{n-2,c}}{q_{n-1,c}}y_{n+1})}
{(y_{n+1}x_n+\frac 1{b_n}\frac{q_{n-2,c}}{q_n-1,c}y_{n+1})(b_n+\frac{q_{n-2,b}}{q_{n-1,b}})}
$$
\begin{equation}            \label{hanclJitu1.18}
\geq \frac {1-y_{n+1}x_n}{y_{n+1}x_n}+
\frac{\frac{1}{b_n}(\frac{q_{n-2,b}}{q_{n-1,b}}-\frac{q_{n-2,c}}{q_{n-1,c}}y_{n+1})}
{(y_{n+1}x_n+\frac 1{b_n}\frac{q_{n-2,c}}{q_n-1,c}y_{n+1})(b_n+\frac{q_{n-2,b}}{q_{n-1,b}})}.
\end{equation}
Set $E_{n}(y_{n+1})=\frac 1{b_n}(\frac{q_{n-2,b}}{q_{n-1,b}}-\frac{q_{n-2,c}}{q_{n-1,c}}y_{n+1})$, 
$D=(y_{n+1}x_n+\frac 1{b_n}\frac{q_{n-2,c}}{q_{n-1,c}}y_{n+1})(b_n+\frac{q_{n-2,b}}{q_{n-1,b}})$ and $F=\frac{x_{n+1}-y_{n+1}}{y_{n+1}}+\frac{1-y_{n+1}x_n}{y_{n+1}x_n}$.
This,  (\ref{hanclJitu1.17}) and  (\ref{hanclJitu1.18}) yield that 
to prove (\ref{hanclJitu1.16}), it is enough to prove that 
\begin{equation}            \label{hanclJitu1.19}
 F+{\frac{1}{b_{n+1}}\frac{E_{n}(y_{n+1})}{D}}\geq 0.
\end{equation} 
Set $E_{n-1}^*(y_{n+1})=\frac 1{b_{n-1}}(\frac{q_{n-3,c}}{q_{n-2,c}}-
\frac{q_{n-3,b}}{q_{n-2,}b}y_{n+1})  \text  { and }\\
D_{n-1}=(x_{n-1}+\frac 1{b_{n-1}}\frac{q_{n-3,c}}{q_{n-2,c}})(b_{n-1}+\frac{q_{n-3,b}}{q_{n-2,b}})$. Now we have 
\begin{align}
\nonumber E_{n}(y_{n+1})=& \frac 1{b_n}(\frac{q_{n-2,b}}{q_{n-1,b}}-\frac{q_{n-2,c}}{q_{n-1,c}}y_{n+1}) \\ \nonumber
=& \frac{1}{b_{n}} \left(\frac 1{(b_{n-1}+\frac{q_{n-3,b}}{q_{n-2,b}})}-\frac{y_{n+1}}{b_{n-1}x_{n-1}+\frac{q_{n-3,c}}{q_{n-2,c}}}\right)
\end{align}
\begin{equation}            \label{hanclJitu1.20}
=\frac 1{b_n}\frac{x_{n-1}-y_{n+1}+E_{n-1}^*(y_{n+1})}{D_{n-1}}
\end{equation} 
and 
\begin{align}
\nonumber E_{n-1}^*(y_{n+1})=& \frac 1{b_{n-1}}\left(\frac{q_{n-3,c}}{q_{n-2,c}}-
\frac{q_{n-3,b}}{q_{n-2,b}}y_{n+1}\right)\\ \nonumber
=& \frac{1}{b_{n-1}} \left(\frac 1{(b_{n-2}x_{n-2}+\frac{q_{n-4,c}}{q_{n-3,c}})}-\frac{y_{n+1}}{b_{n-2}+\frac{q_{n-4,b}}{q_{n-3,b}}}\right) \\ \nonumber
=& \frac 1{b_{n-1}}\frac{1-x_{n-2}y_{n+1}+E_{n-2}(y_{n+1})}{D_{n-2}}.
\end{align}
From this and the fact that $1-x_{n-2}y_{n+1}<0$ we obtain that 
\begin{align*}
 E_{n-1}^*(y_{n+1})=& \frac 1{b_{n-1}}\frac{1-x_{n-2}y_{n+1}+E_{n-2}(y_{n+1})}{D_{n-2}} \\
>& \frac{1-x_{n-2}y_{n+1}}{x_{n-2}}+\frac 1{b_{n-1}}\frac{E_{n-2}(y_{n+1})}{D_{n-2}}.
\end{align*}
This and (\ref{hanclJitu1.20}) imply that
\begin{align}
\nonumber E_n(y_{n+1})&=\frac 1{b_n}\frac{x_{n-1}-y_{n+1}+E_{n-1}^*(y_{n+1})}{D_{n-1}} \\  \label{hanclJitu1.21}           
&>\frac 1{b_n}\frac{x_{n-1}-y_{n+1}+\frac{1-x_{n-2}y_{n+1}}{x_{n-2}}+\frac 1{b_{n-1}}\frac{E_{n-2}(y_{n+1})}{D_{n-2}}}{D_{n-1}}.
\end{align}  

For every $j=n_0,\dots ,n$ we have
\begin{align*}
&x_{j+1}-y_{n+1}+\frac{1-x_{j}y_{n+1}}{x_{j}}\\
&=1+\frac {\varepsilon}{\sqrt {(j+1)}}-1-\frac{1}{(n+1)\ln (n+1)}+\frac{1-(1+\frac {\varepsilon}{\sqrt {j}})(1+\frac{1}{(n+1)\ln (n+1)})}{1+\frac {\varepsilon}{\sqrt {j}}}\\
&=\frac {\varepsilon}{\sqrt {(j+1)}}-\frac{1}{(n+1)\ln(n+1)}+\frac{1}{1+\frac {\varepsilon}{\sqrt {j}}}-1-\frac{1}{(n+1)\ln(n+1)} \\
&=\frac {\varepsilon}{\sqrt {(j+1)}}-\frac{2}{(n+1)\ln(n+1)}-\frac{\varepsilon}{\sqrt {j}+\varepsilon} 
=\frac{\varepsilon^2-\frac {\varepsilon}{\sqrt{j}+\sqrt{j+1}}}{\sqrt{(j+1)}(\sqrt {j}+\varepsilon)}-\frac{2}{(n+1)\ln(n+1)}.
\end{align*}
This and the fact that the function $f(x)=\frac{\varepsilon^2-\frac {\varepsilon}{\sqrt{x}+\sqrt{x+1}}}{\sqrt{x+1}(\sqrt {x}+\varepsilon)}$ is decreasing for $x>n_0$ we obtain that 
\begin{align}
\nonumber x_{j+1}-y_{n+1}+\frac{1-x_{j}y_{n+1}}{x_{j}}&= \frac{\varepsilon^2-\frac {\varepsilon}{\sqrt{j}+\sqrt{j+1}}}{\sqrt{(j+1)}(\sqrt {j}+\varepsilon)}-\frac{2}{(n+1)\ln(n+1)} \\ \label{hanclJitu1.22}
&\geq \frac{\varepsilon^2-\frac {\varepsilon}{\sqrt{n}+\sqrt{n+1}}}{\sqrt{(n+1)}(\sqrt {n}+\varepsilon)}-\frac{2}{(n+1)\ln(n+1)}.
\end{align}
From this and  (\ref{hanclJitu1.13}) we obtain that
$x_{j+1}-y_{n+1}+\frac{1-x_{j}y_{n+1}}{x_{j}}>0.$ Hence  
$x_{n-1}-y_{n+1}+\frac{1-x_{n-2}y_{n+1}}{x_{n-2}}>0.$
This and (\ref{hanclJitu1.21}) yield that
\begin{align}
\nonumber E_{n}(y_{n+1})> &
\frac 1{b_n}\frac{x_{n-1}-y_{n+1}+\frac{1-x_{n-2}y_{n+1}}{x_{n-2}}+\frac 1{b_{n-1}}\frac{E_{n-2}(y_{n+1})}{D_{n-2}}}{D_{n-1}} \\ \nonumber
 >& \frac 1{b_n}\frac{\frac 1{b_{n-1}}\frac{E_{n-2}(y_{n+1})}{D_{n-2}}}{D_{n-1}}.
\end{align}
 If we repeat this procedure then we obtain 
\begin{align}
\nonumber E_{n}(y_{n+1})>& 
\frac 1{b_n}\frac{\frac 1{b_{n-1}}(\frac{E_{n-2}(y_{n+1})}{D_{n-2}})}{D_{n-1}}>\dots >
\frac {E_{n-2[\frac n2]+2n_0-2}(y_{n+1})}{\prod_{j=n}^{n-2[\frac n2]+2n_0} b_jD_{j-1}}\\ \nonumber 
>& \frac {-| E_{n-2[\frac n2]+2n_0-2}(y_{n+1})|}{\prod_{j=n}^{n-2[\frac n2]+2n_0} x_{j-1}}=\frac {-| E_{n-2[\frac n2]+2n_0-2}(y_{n+1})|}{\prod_{j=n}^{n-2[\frac n2]+2n_0} (1+\frac \varepsilon{\sqrt{j-1}})}\\ \nonumber
=& \frac {-| E_{n-2[\frac n2]+2n_0-2}(y_{n+1})|}{e^{\sum_{j=n}^{n-2[\frac n2]+2n_0}\log (1+\frac \varepsilon{\sqrt{j-1}})}}
> \frac {-| E_{n-2[\frac n2]+2n_0-2}(y_{n+1})|}{e^{\sqrt2\varepsilon(\sqrt {n-2}-\sqrt{2n_0+2})}}.
\end{align}
This implies that to prove (\ref{hanclJitu1.19}) it is enough to prove that 
\begin{equation}            \label{hanclJitu1.23}
 F-\frac {| E_{n-2[\frac n2]+2n_0-2}(y_{n+1})|}{e^{\sqrt2\varepsilon(\sqrt {n-2}-\sqrt{2n_0+2})}}\geq 0.
\end{equation}
From    (\ref{hanclJitu1.22}) we obtain that 
\begin{align*}
 F& =  \frac{1+\frac {\varepsilon}{\sqrt {(n+1)}}-1-\frac{1}{(n+1)\ln (n+1)}}{y_{n+1}}+\frac{1-(1+\frac {\varepsilon}{\sqrt {n}})(1+\frac{1}{(n+1)\ln (n+1)})}{y_{n+1}x_n}\\
&= \frac{1}{y_{n+1}}(\frac{\varepsilon^2-\frac {\varepsilon}{\sqrt{n}+\sqrt{n+1}}}{\sqrt{(n+1)}(\sqrt {n}+\varepsilon)}-\frac{2}{(n+1)\ln(n+1)}).
\end{align*}
This implies that inequality (\ref{hanclJitu1.23}) has the form
$$\frac{1}{y_{n+1}}(\frac{\varepsilon^2-\frac {\varepsilon}{\sqrt{n}+\sqrt{n+1}}}{\sqrt{(n+1)}(\sqrt {n}+\varepsilon)}-\frac{2}{(n+1)\ln(n+1)})
-\frac {| E_{n-2[\frac n2]+2n_0-2}(y_{n+1})|}{e^{\sqrt2\varepsilon(\sqrt {n-2}-\sqrt{2n_0+2})}}\geq 0
$$
which holds for all sufficiently large $n$. The proof of Lemma \ref{hanclJitu1.l1} is complete.
\end{proof}

\textbf{Acknowledgements.} 
M.~L.~Laursen is supported by the Independent Research Fund Denmark (Grant ref. 1026-00081B).
J.B Leta is supported by grant SGS01/PřF/2024.

J. Han\v cl\ and J. Leta\\
Department of Mathematics, Faculty of Science, University of Ostrava, 30.~dubna~22, 701~03 Ostrava~1, Czech Republic.\\
e-mail: hancl@osu.cz, jiiberhanu@gmail.com\\

M. Laursen\\
Department of Mathematics, Aarhus University, Ny Munkegade 118, 8000 Aarhus C, Denmark. \\
e-mail: mll@math.au.dk

\end{document}